\documentclass{siamart190516}
% Modified  Nov 23, 2022  MP

\usepackage{a4,latexsym,amssymb,graphicx,rotating,amsmath,subfigure}
\usepackage[normalem]{ulem}

\usepackage{booktabs}
\usepackage{enumitem}

\newcommand{\IR}{{\mathbb{R}}}

\newcommand{\eqdef}{\stackrel{\rm def}{=}}
\newcommand{\req}[1]{(\ref{#1})}
\newcommand{\frozen}{{\sc FAR2}}
\newcommand{\frozenso}{{\sc FAR2-SO}}
\newcommand{\frozenr}{{\sc FAR2-rk}}
\newcommand{\frozenp}{{\sc FAR2-pk}}

\newcommand{\galr}{{\sc AR2-rqs}}
\newcommand{\galg}{{\sc AR2-glrt}}
\newcommand{\arc}{{\sc AR2}}

%packages for figures with tickz
\usepackage{pgfplots}
\pgfplotsset{compat=1.13}

\usepackage{algpseudocode}

\newtheorem{remark}{Remark}[section]
\usepackage{todonotes}

\newcommand{\revdone}[1]{\textcolor{black}{#1}}
\newcommand{\REV}[1]{\textcolor{black}{#1}}

\title{Regularized methods via cubic model subspace minimization for nonconvex optimization}

\author{Stefania Bellavia\thanks{Corresponding author, Dipartimento di Ingegneria Industriale, Universit\`a degli Studi di Firenze, Viale Morgagni, 40/44, 50134 Florence, Italy.
Members of the INdAM Research Group GNCS. Emails: {\tt \{stefania.bellavia,margherita.porcelli\}@unifi.it}} \and
Davide Palitta\thanks{Dipartimento di Matematica, (AM)$^2$,
Alma Mater Studiorum - Universit\`a di Bologna, Piazza di Porta San Donato 5,
 40126 Bologna, Italia. Members of the INdAM Research Group GNCS.  Emails:
{\tt  \{davide.palitta,valeria.simoncini\}@unibo.it}}\and{\,}
 Margherita Porcelli$^{*,}$\thanks{ISTI--CNR, Via Moruzzi 1, Pisa, Italia}
 \and Valeria Simoncini$^{\dagger,}$\thanks{IMATI-CNR, Via Ferrata 5/A, Pavia, Italia}}

%\date{\documentdate}
\date{}

% ... overwrite A4 top margin to make it readable on letter.
\topmargin -10truept

\begin{document}
%\today 

\maketitle

\begin{abstract} Adaptive cubic regularization methods for solving nonconvex problems need the efficient computation of  the trial step, involving the minimization of a cubic model.  
We propose a new approach in which this model is minimized in a low  dimensional subspace that, in contrast to classic approaches,  is reused for a number of iterations.  Whenever the trial step produced by the low-dimensional minimization process is unsatisfactory, we employ a regularized Newton step   whose regularization parameter is a by-product of  the model minimization over the  low-dimensional subspace.
We show that the  worst-case complexity of classic cubic regularized methods is preserved, despite the possible regularized Newton steps. We  focus on the large class of problems  for which (sparse) direct linear system solvers are available and provide
 several experimental results showing the very large gains of our new approach when compared to standard implementations of  adaptive cubic regularization methods based on direct linear solvers. Our first choice as projection space for the low-dimensional model minimization is the  polynomial Krylov subspace; nonetheless, we also explore the use of rational Krylov subspaces in case where the polynomial ones lead to less competitive numerical results.
 \end{abstract}

\begin{keywords}
Adaptive regularization methods, nonconvex optimization, secant methods, Krylov subspaces, worst-case iteration complexity
\end{keywords}

\begin{AMS}
49M37, %Numerical methods based on nonlinear programming
65K05, %Numerical mathematical programming methods
68W40 %	Analysis of algorithms

\end{AMS}
\vskip 10pt

%%%%%%%%%%%%%%%%%%%%%%%%%%%%%%%%%%%%%%%

\section{Introduction}
We address the numerical solution of possibly nonconvex, unconstrained optimization problems 
 of the form
 \begin{equation}\label{eq:pb}
 \min_{x\in \mathbb{R}^n} f(x),
\end{equation}
where the objective function $f:\mathbb{R}^n\rightarrow \mathbb{R}$ is supposed to be twice-continuously differentiable and bounded from below. 
To attack (\ref{eq:pb}) we consider  second order adaptive regularization methods (\arc). These are well established, globally convergent  variants of the Newton method for~\eqref{eq:pb}, 
where the step length  is implicitly controlled.
This feature is achieved by  adding a cubic term in the classic quadratic Taylor model to penalize long steps \cite[Section 3.3]{book_compl}. 
More precisely, at a generic \arc\ iteration $k$, the trial step is computed by  (approximately) solving the subproblem
\begin{equation}\label{eq:subcub}
\min_{s \in \IR^n} m_k(s),
\end{equation}
where $m_k(s)$ is the following {\em cubic regularized model} of the objective function  $f$ around the current point $x_k$
\begin{equation}\label{eq:cubmod}
 m_k(s) \eqdef f (x_k ) + s^T \nabla f(x_k) + \frac{1}{2} s^T \nabla^2 f(x_k) s +\frac{1}{3} \sigma_k \|s\|_2^3 = T_k(s) +\frac{1}{3} \sigma_k \|s\|_2^3.
\end{equation}
The scalar   $\sigma_k > 0$ is the regularization parameter and $T_k(s)$ is the second-order Taylor-series expansion of $f$ at $x_k$. 

For nonconvex problems, \arc\ exhibits a worst-case  ${\cal O}(\epsilon^{-3/2})$  iteration-com\-plexity to find an $\epsilon$-approximate  first-order minimizer, i.e., a point $x_\epsilon$ such that 
\begin{equation}\label{fomin}
\|\nabla f(x_\epsilon)\|\le \epsilon,
\end{equation}
and a worst-case ${\cal O}({\rm max}(\epsilon^{-3/2}, \epsilon_H^{-2}))$  iteration-complexity to find 
 an $(\epsilon,\epsilon_H)$-approx\-imate second-order minimizer, i.e.,  a point $x_{\epsilon,\epsilon_H}$ such that
\begin{equation}\label{fomin2}
\|\nabla f(x_{\epsilon,\epsilon_H})\|\le \epsilon\quad \mbox{and} \quad \lambda_{\min}
(\nabla^2 f(x_{\epsilon,\epsilon_H}))\ge  -\epsilon_H.
\end{equation}
\arc\ is optimal in terms of complexity when compared to standard second-order
methods, such as trust-region and Newton methods, whose iteration-complexity  to compute an $\epsilon$-approximate  first-order minimizer
amounts to ${\cal O}(\epsilon^{-2})$
\cite[Chapter~3] {book_compl}.

The main computational challenge of \arc\ procedures is the construction of an approximate solution to (\ref{eq:subcub}).
In the current literature this task is performed either  through the solution of
the so-called ``secular'' equation
\cite[Chapter~9] {book_compl}, or by approximately minimizing the cubic model over a sequence of nested Krylov subspaces  generated at each nonlinear iteration, onto which the secular equation is projected and solved \cite[Chapter~10] {book_compl}. 
In the first case a sequence of shifted linear systems
has to be solved \revdone{at each nonlinear iteration}.

Here we develop a hybrid procedure that combines both the approaches mentioned above for the computation of the trial step. 
We propose a predictor-corrector scheme, where the prediction step performs the minimization of the cubic model in a low dimensional Krylov subspace, whereas the corrector step, if needed, is a regularized Newton step
whose regularization parameter is provided by the prediction step.
The subspace 
 is  kept \emph{frozen} during the iterations as long as possible; whenever it fails to provide a sufficiently good regularization parameter, the space is \emph{refreshed}, namely a new space is computed. If this new subspace is still unable to provide a good regularization parameter,  then an approximate minimizer of the cubic method in the full space is computed by the standard secular-equation-based procedure \cite[Chapter~9] {book_compl}. We expect to use this latter step only occasionally.

Our novel scheme is tailored to problems where a factorization of $\nabla^2 f(x_k) + \lambda I$ for a given $\lambda$ is both feasible and efficient, along the lines of \cite{grt10}. 
This is the case for small to medium-sized problems, and also for the large-scale sparse setting; see, e.g,~\cite{ScottTuma2023}. 
We chose to employ sparse
direct methods for such systems as iterative procedures are
highly problem dependent and generally requires a suitable preconditioner.
At a first glance, adopting direct solvers for the solution of linear systems may seem in contrast with the construction of Krylov subspaces. Nonetheless, these two strategies are synergic ingredients in our 
nonlinear optimization solver.  
Indeed, the minimization of the model over a  low-dimensional and possibly frozen Krylov subspace allows us to adaptively adjust the regularization parameter for the  
regularized Newton step and preserve the complexity of the original \arc, despite a regularized Newton step is possibly employed.

The main practical advantage of our new scheme is the remarkable reduction in the overall number of factorizations of the Hessian matrix, 
when compared to standard secular-equation-based algorithms as those based on~\cite{grt10}. More precisely, our numerical experiments show that the approximation subspace is refreshed only few times (roughly in the 1.8\% of the total number of nonlinear iterations) and the secant method is seldom invoked; see section \ref{Numerical experiments}. As a consequence, a significant number of nonlinear iterations sees 
the computation of at most a single factorization. 

Our approach is very general as its derivation does not depend on the chosen reduction space. Nonetheless, the performance can be influenced by this selection. Our first choice is the (classic) polynomial Krylov subspace, which is cheap to construct and performed rather well in our extensive numerical experimentation on 
different datasets. 
We also considered using rational Krylov subspaces, which showed particularly good performance for sequences of slowly varying or severely ill-conditioned subproblems.

\subsection*{Related works}\label{sec:biblio}
The efficient solution of the subproblem (\ref{eq:subcub}) in AR2-like methods has been the subject of intense research in the last 20 years; see \cite[Chapters 9-10]{book_compl} and references therein.
As already mentioned,  state-of-the-art methods can be mainly divided into two classes: Approaches based on the approximate minimization of the regularized model using a Lanczos process \cite{cgt09,sigma,gouldSim20,Jia22,Lieder20}
and schemes based on the 
secular formulation exploiting matrix factorizations \cite{grt10}.  
In \cite{cgt09, sigma, gouldSim20} the cubic model is minimized over a sequence of nested Krylov subspaces generated by the 
Lanczos procedure. In \cite{Jia22} it is observed that this approach may  produce a large dimensional subspace when applied to ill-conditioned problems and a nested restarting version for the large scale and ill-conditioned case is proposed.  Both  \cite{Jia22}  and \cite{Lieder20} reduce the minimization subproblem  to a suitable generalized eigenvalue problem.

The algorithm proposed in~\cite{ARCqK} concurrently solves a set of systems with the shifted Hessian matrix 
for a predefined large number of shifts
 by means of a hybrid procedure employing the secular equation combined with the Lanczos method for indefinite linear systems.

 A different approach is taken in \cite{cd_SIOPT19}, where the gradient descent is used to approximate the global minimizer of the cubic model; the worst-case iteration complexity is studied. 

More recently, several works have focused on the development of
modified Newton methods which exhibit the \arc-like worst-case complexity (or nearly so); see e.g. \cite{birgin_martinez, curtis19,gst_yet23}.  
 In \cite{curtis19} a general framework covering several algorithms,  like \arc\ and the modified trust-region algorithm named {\sc TRACE} \cite{trace}, is given. At each iteration the trial step and the multiplier estimate are calculated as an approximate   solution of  a constrained optimization subproblem whose objective function is a  regularized Newton model.
 A second-order algorithm  alternating between a regularized Newton and negative curvature steps is given in \cite{gst_yet23}, while  \cite{birgin_martinez} incorporates cubic descent into a quadratic regularization framework. 
 
 Similarly to the methods in \cite{birgin_martinez,curtis19,gst_yet23}, also our novel scheme employs quadratic-regularization variants  preserving the complexity results of the cubic regularization.  On the other hand, 
 in contrast to what is done in, e.g.,~\cite{birgin_martinez,curtis19},  we employ the regularized Newton step only when the step provided by the subspace minimization is not satisfactory.
  Moreover, the regularization term is a byproduct of the subspace minimization process, and the computation of the smallest eigenvalue of the Hessian matrix is never required when approximate first-order optimality points are sought.
 We also aim at 
reducing the computational cost associated to the step computation.
To this end, we make use of low dimensional Krylov subspaces whose possibly expensive basis construction is carried out only a handful of times throughout all the nonlinear iterations. Indeed, once a Krylov subspace is constructed at a certain nonlinear iteration $k$, we reuse it in the subspace minimization as long as possible thus fully capitalizing on the computational efforts performed for its construction. Such a strategy significantly differs from the ones proposed in, e.g.,~\cite{ARCqK, Jia22, gst_yet23,Lieder20}. 
 
\subsection*{Outline}
Here is a synopsis of the paper. In
section~\ref{pre} we present some preliminary background about the secular formulation of~\eqref{eq:subcub} (section~\ref{sec:sec}) and the \arc\ method (section~\ref{The AR2 method}).
Section~\ref{The new algorithm} is devoted to present the main contribution of this paper, namely the novel solution process for~\eqref{eq:pb}. 
 In section~\ref{Complexity} we study the first-order complexity of our algorithm,  and show that  the optimal worst-case iteration complexity of \arc\ is maintained. 
In section~\ref{sec:appr} we discuss the choice of the Krylov approximation space. 
Numerical results displaying the potential of our novel approach are presented in section~\ref{Numerical experiments} while in section~\ref{Conclusions} we draw some conclusions.
Finally, Appendix~\ref{appendixA} contains a variant of the proposed algorithm, specifically designed to find approximate second-order optimality points, and the corresponding complexity analysis, while  Appendix~\ref{sec:opm} collects the complete results obtained during our numerical testing.

\subsection*{Notation}
Throughout the paper we adopt the following notation.  At the $k$-th nonlinear iteration, the gradient and the Hessian of $f$ evaluated at the current iterate $x_k$ are denoted
as $g_k = \nabla f(x_k)$ and $H_k = \nabla^2 f(x_k)$, respectively. The symbol $I$ is used for the $n\times n$ identity matrix, and $\|\cdot \|$ denotes the Euclidean norm of a vector.

\section{Preliminaries} \label{pre}
This section provides a brief description of the features of the cubic regularized minimization problem (\ref{eq:subcub}) in terms of its secular formulation 
and a quick review of the main steps of the \arc\ method.

\subsection{The ``secular'' formulation}\label{sec:sec}
The secular formulation of the subproblem (\ref{eq:subcub}) is defined by exploiting
the characterization of the global minimizer of $m_k(s)$ reported below. For the sake of simplicity, in this section we omit the subscript $k$.

\begin{theorem}\cite[Theorem 8.2.8]{book_compl}
Any global minimizer $s^*$ of (\ref{eq:subcub}) satisfies
\begin{equation}\label{shift}
(H + \lambda^* I) s^* = - g,    
\end{equation}
where $H + \lambda^* I$ is positive semidefinite and
$$\lambda^* = \sigma \|s^*\|.$$
If $H + \lambda^* I$ is positive definite, then $s^*$ is unique.
\end{theorem}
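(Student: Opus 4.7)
The plan is to exploit the relation between the cubic-regularized subproblem and a classical trust-region subproblem, for which the characterization of the global minimizer is well known. First I would derive the first-order optimality condition by direct differentiation of the cubic model. For $s^*\neq 0$, the gradient of $m$ reads $\nabla m(s) = g + Hs + \sigma \|s\| s$, so $\nabla m(s^*)=0$ gives
\begin{equation*}
(H + \sigma \|s^*\| I)\, s^* = -g,
\end{equation*}
which, upon setting $\lambda^* := \sigma \|s^*\|$, is exactly (\ref{shift}). The degenerate case $s^* = 0$ is handled separately: optimality forces $g = 0$ and (\ref{shift}) holds trivially with $\lambda^* = 0$.

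Next I would establish the positive semidefiniteness of $H + \lambda^* I$. The key observation is that, because the cubic term $\tfrac{\sigma}{3}\|s\|^3$ depends on $s$ only through $\|s\|$, any global minimizer $s^*$ of $m$ must also be a global minimizer of the quadratic $s^T g + \tfrac12 s^T H s$ over the ball $\{s:\|s\|\le\|s^*\|\}$. This is a standard trust-region subproblem, and the classical Mor\'e--Sorensen theory guarantees the existence of a Lagrange multiplier $\lambda\ge 0$ such that $(H+\lambda I)s^* = -g$ and $H+\lambda I\succeq 0$. Combining uniqueness of the multiplier with the first-order condition derived above forces $\lambda = \lambda^*$, which yields $H+\lambda^* I \succeq 0$.

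For the uniqueness claim when $H+\lambda^* I$ is positive definite, I would derive the algebraic identity
\begin{equation*}
m(s) - m(s^*) = \tfrac{1}{2}(s-s^*)^T (H+\lambda^* I)(s-s^*) + \tfrac{\sigma}{6}\bigl(\|s\|-\|s^*\|\bigr)^2 \bigl(2\|s\|+\|s^*\|\bigr),
\end{equation*}
obtained by expanding $m(s)-m(s^*)$, eliminating $g$ via (\ref{shift}), and using the factorization $2a^3 - 3a^2 b + b^3 = (a-b)^2(2a+b)$ with $a=\|s\|$, $b=\|s^*\|$. Both terms on the right-hand side are nonnegative; the first is strictly positive whenever $s\ne s^*$ by the positive-definiteness assumption. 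Hence any other global minimizer must coincide with $s^*$.

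The main obstacle is the semidefiniteness assertion: stationarity alone gives only $d^T(H+\lambda^* I)d\ge 0$ for $d$ orthogonal to $s^*$ (via a second-order expansion where the cubic correction is $O(t^4)$), and extending this to all directions requires using the \emph{global} minimizer property. The reduction to a trust-region problem bypasses this cleanly, though the self-contained route via the displayed identity is an equally valid alternative.
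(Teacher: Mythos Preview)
The paper does not prove this statement; it is quoted verbatim from \cite[Theorem~8.2.8]{book_compl} and used as background for the secular formulation. There is therefore no ``paper's own proof'' to compare against.

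That said, your argument is correct and follows the standard route. The first-order condition is immediate from differentiating the cubic model (and the degenerate case $s^*=0$ indeed forces $g=0$ and $H\succeq 0$ by a direct second-order expansion). The reduction to a trust-region subproblem on the ball $\{\|s\|\le\|s^*\|\}$ is the cleanest way to obtain $H+\lambda^* I\succeq 0$: if some $\tilde s$ with $\|\tilde s\|\le\|s^*\|$ had strictly smaller quadratic part, then $m(\tilde s)<m(s^*)$ since the cubic term is nondecreasing in $\|s\|$, contradicting global optimality; Mor\'e--Sorensen then supplies a multiplier $\lambda\ge0$ with $H+\lambda I\succeq 0$, and subtracting the two stationarity equations gives $(\lambda-\lambda^*)s^*=0$, hence $\lambda=\lambda^*$ when $s^*\ne0$. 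Your displayed identity
\[
m(s)-m(s^*)=\tfrac12(s-s^*)^T(H+\lambda^* I)(s-s^*)+\tfrac{\sigma}{6}\bigl(\|s\|-\|s^*\|\bigr)^2\bigl(2\|s\|+\|s^*\|\bigr)
\]
is correct (the factorization $2a^3-3a^2b+b^3=(a-b)^2(2a+b)$ checks out) and immediately yields strict uniqueness under positive definiteness. This is essentially the argument one finds in the cited reference, so nothing is missing.

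One small remark: you correctly flag that the identity alone, applied to infinitesimal perturbations $s=s^*+td$, only gives the second-order necessary condition $d^T(H+\lambda^* I)d+\sigma\|s^*\|^{-1}((s^*)^Td)^2\ge0$, which is weaker than $H+\lambda^* I\succeq0$; the global reduction to the trust-region problem is what closes this gap. Your write-up already acknowledges this, so the logic is sound.
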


Let $\lambda_1$ be the leftmost eigenvalue of $H$. 
In the so-called ``easy case'', $\lambda^* > \lambda_S \eqdef 
\max \{ 0, -\lambda_1\}$
and a solution to (\ref{eq:subcub}) can be computed by solving the scalar {\em secular equation}
\begin{equation}\label{eq:sec}
 \phi_R(\lambda; g, H, \sigma) \eqdef \|(H + \lambda I)^{-1}g\| - \frac{\lambda}{\sigma}=0.
\end{equation}
It can be proved that the Newton and the secant methods applied to (\ref{eq:sec}) rapidly converge to $\lambda^*$
starting from any approximation in $(\lambda_S, + \infty)$ \cite{cgt09,book_compl,grt10}.
Whenever the computation of $\lambda_1$ is prohibitive and the computation of appropriate starting guesses is thus not guaranteed, suitable estimations to $\lambda_1$ can be derived as done in~\cite{grt10}.

As opposite to the ``easy case'', the ``hard case'' takes place when $\lambda_1<0$ and $g$ is orthogonal to the $\lambda_1$-eigenspace of $H$, namely $v^T g =0$ for all $v$ in $\{v:\,(H-\lambda_1I)v=0\}$. In this case, $\lambda^* = -\lambda_1$ and there is no
solution to the secular equation in  $(\lambda_S, + \infty)$. Sophisticated numerical strategies have to be employed to compute the global minimizer $s^*$ in this case; see, e.g.,~\cite[Section 9.3.4]{book_compl}. 

The theorem below summarizes the form of the global 
minimizer of (\ref{eq:subcub}) in both the easy and the  hard cases.

\begin{theorem}\cite[Corollary 8.3.1]{book_compl}\label{teo:hc}
Any global minimizer of (\ref{eq:subcub}) can be expressed as
\begin{equation}
s^* = \left \{
 \begin{array}{ll}
  -(H + \lambda^* I)^{-1} g \ (uniquely) & \mbox{ if } \lambda^* > -\lambda_1, \\
  -(H + \lambda^* I)^\dagger g+\alpha v_1 & \mbox{ if } \lambda^* = -\lambda_1, \\
 \end{array} \right .
\end{equation}
where $(H + \lambda^* I)^\dagger$ denotes the pseudoinverse of $H + \lambda^* I$, $\lambda^* = \sigma \|s^*\|$, $\lambda_1$ is the leftmost eigenvalue of $H$, $v_1$
is any corresponding eigenvector, and $\alpha$ is one of the two roots of 
$\|-(H + \lambda^* I)^\dagger g + \alpha v_1 \| = \lambda^*/\sigma.$
\end{theorem}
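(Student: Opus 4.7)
I would derive both expressions as direct consequences of the characterization in Theorem 8.2.8, splitting on whether $H+\lambda^* I$ is positive definite or merely positive semidefinite. By that theorem, any global minimizer $s^*$ solves the linear system $(H+\lambda^* I)s^* = -g$ with $H+\lambda^* I \succeq 0$ and $\lambda^* = \sigma\|s^*\|$.

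First, consider the case $\lambda^* > -\lambda_1$. Then $H+\lambda^* I$ has all positive eigenvalues, hence is invertible, and the linear system has the unique solution $s^* = -(H+\lambda^* I)^{-1}g$. This yields the first branch of the formula, and uniqueness follows from invertibility.

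Next, suppose $\lambda^* = -\lambda_1$, so that $H+\lambda^* I$ is singular with null space $\mathcal{N} = \ker(H - \lambda_1 I)$. Since $(H+\lambda^* I)s^* = -g$ has a solution by Theorem 8.2.8, $g$ must lie in the range $\mathcal{R}(H+\lambda^* I)$, which (by symmetry) equals $\mathcal{N}^\perp$. The minimum-norm solution of this underdetermined system is $-(H+\lambda^* I)^\dagger g$, and, because the pseudoinverse of a symmetric matrix maps into $\mathcal{N}^\perp$, this vector is orthogonal to every eigenvector in $\mathcal{N}$. The general solution is therefore $s^* = -(H+\lambda^* I)^\dagger g + w$ for some $w \in \mathcal{N}$; writing $w = \alpha v_1$ covers the case when $v_1$ spans $\mathcal{N}$ (and, more generally, any component in $\mathcal{N}$ can be represented as $\alpha v_1$ for a suitable eigenvector choice, which is why the statement allows $v_1$ to be \emph{any} corresponding eigenvector).

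Finally, $\alpha$ is pinned down by the condition $\lambda^* = \sigma\|s^*\|$, i.e.\ $\|s^*\| = \lambda^*/\sigma$. Since $-(H+\lambda^* I)^\dagger g \perp v_1$, the Pythagorean identity gives
\begin{equation*}
\bigl\|-(H+\lambda^* I)^\dagger g + \alpha v_1\bigr\|^2 = \bigl\|(H+\lambda^* I)^\dagger g\bigr\|^2 + \alpha^2\|v_1\|^2 = (\lambda^*/\sigma)^2,
\end{equation*}
so $\alpha$ is one of the two real roots of this quadratic, which is exactly the stated condition $\|-(H+\lambda^* I)^\dagger g + \alpha v_1\| = \lambda^*/\sigma$. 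I would expect the only mildly delicate point to be the orthogonality of $(H+\lambda^* I)^\dagger g$ to $v_1$ (needed for both the parametrization and the clean Pythagorean identity); this, however, is a standard property of the Moore--Penrose pseudoinverse of a symmetric matrix.
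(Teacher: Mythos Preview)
Your argument is correct and is essentially the standard derivation of this corollary from the first-order characterization in Theorem~8.2.8. However, the paper does not actually supply a proof of this statement: it is quoted verbatim as \cite[Corollary~8.3.1]{book_compl} and used as background, so there is no in-paper proof to compare against. Your write-up would serve perfectly well as the missing justification; the only point worth tightening is the remark that ``$w=\alpha v_1$ covers the case when $v_1$ spans $\mathcal N$'': when the $\lambda_1$-eigenspace has dimension larger than one, the correct reading (which you do hint at) is that for a \emph{given} minimizer $s^*$ one chooses $v_1$ parallel to its null-space component, and this is precisely why the statement allows $v_1$ to be any eigenvector.
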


\subsection{The \arc\ method}\label{The AR2 method}
The \arc\ method is an iterative procedure where, at iteration $k$, a trial step $s_k$ is computed by approximately minimizing the cubic model~\eqref{eq:cubmod}.  Algorithm \ref{AR2_algo} summarizes a possible implementation of the \arc\ algorithm; see also~\cite[Algorithm 3.3.1]{book_compl}.
Theorem \ref{teo:hc} determines the exact minimizer of $m_k(s)$ but, in practice, 
only an approximate minimizer $s_k$ satisfying conditions~\req{eq:dec1}
and~\req{eq:dec2} in Algorithm~\ref{AR2_algo} is indeed necessary; see, e.g.,~\cite[p.65]{book_compl}. 

Given the trial step $s_k$,  the trial point $x_k+s_k$ is then used to compute the ratio
\begin{equation}\label{eq_rho}
 \rho_k = \frac{f(x_k) - f(x_k+s_k)}{T_k(0)-T_k(s_k)},
\end{equation}
with $T_k$ as in (\ref{eq:cubmod}).
 If $\rho_k\ge \eta_1$, with $\eta_1\in(0,1)$, then the trial point is accepted, 
 the iteration is declared  successful and  the regularization parameter $\sigma_k$ is possibly decreased. Otherwise, an unsuccessful iteration occurs: The point $x_k+s_k$ is rejected and the regularisation parameter  is increased.

\begin{algorithm}[t]
\caption{The adaptive-regularization algorithm with a second order model (\arc) algorithm 
\cite{book_compl} \label{AR2_algo}}
\begin{algorithmic}[1]
\Require  An initial point $x_0$; accuracy threshold $\epsilon \in (0,1)$;
an initial regularization parameter $\sigma >0$ are given as well as constants 
$\eta_1, \eta_2, \gamma_1, \gamma_2,  \theta_1, \sigma_{\min} $ that satisfy
$$
\sigma_{\min} \in (0,\sigma_0],\quad \theta_1 >0,\quad 0<\eta_1\le\eta_2<1,\quad 
0<\gamma_1 < 1 < \gamma_2.
$$

\State Compute $f(x_0), g_0= \nabla f(x_0), H_0 = \nabla^2 f(x_0)$, and set $k=0$.

 \State{\bf Step 1: Test for termination.} If $\|g_k\| \le \epsilon$, terminate.

 \State{\bf Step 2: Step computation.}
  Compute a step $s_k$ such that 
   \begin{equation}\label{eq:dec1}
  m_k(s_k) < m_k(0)
 \end{equation}
 and \begin{equation}\label{eq:dec2}
     \|\nabla m_k(s_k) \| \le \frac{1}{2}\theta_1 \|s_k\|^2
     \end{equation}
\State {\bf Step 3: Acceptance of the trial point.} Compute $f(x_k+s_k)$ and 
the ratio $\rho_k$ given in
\eqref{eq_rho}.

\State If $\rho_k \ge \eta_1$, then define $x_{k+1}= x_k + s_k$ and compute $g_{k+1}=\nabla f(x_{k+1})$
and $H_{k+1} = \nabla^2 f(x_{k+1})$.
Otherwise define $x_{k+1}= x_k$.

\State {\bf Step 4: Regularization parameter update.} Compute
   \begin{equation}\label{sig}
   \sigma_{k+1} \in \left \{
   \begin{array}{lll}
    [\max(\sigma_{\min}, \gamma_1 \sigma_k), \sigma_k  ]                   & \mbox{ if } \rho_k\geq \eta_2
      \\
   \sigma_k &\mbox{ if }\rho_k\in [\eta_1,\eta_2) 
      \\
   \gamma_2 \sigma_k  & \mbox{ otherwise }
         \end{array}
   \right . 
   \end{equation}
   \State Increment $k$ by one and {\bf goto} Step 1.

\end{algorithmic}
\end{algorithm}

\section{The new \frozen\ algorithm}\label{The new algorithm}
In this section we introduce our subspace cubic approach
that we name the Frozen AR2 (\frozen) algorithm. The  main idea behind \frozen\ is to construct {a low-dimensional subspace} $\mathcal K$ at the initial iteration, compute a minimizer to the cubic model projected onto this subspace, and then keep using such subspace also in the subsequent nonlinear iterations.
The minimizer of $m_k(s)$ over  $\mathcal K$ is cheaply computed by solving the \emph{projected} secular equation, that is equation~\req{eq:sec} where the quantities involved $H$ and $g$ consist of the projection of $H_k$ and $g_k$ onto $\mathcal{K}$, respectively. 
If \eqref{eq:dec2} in Algorithm~\ref{AR2_algo} is not satisfied by such minimizer\footnote{Condition \eqref{eq:dec1} is naturally satisfied as the null vector lies in such subspace.}, then a further  regularized Newton-like step is performed where the employed regularization parameter $\widehat\lambda_k$ is a by-product of the minimization process over the current  subspace. 
The current subspace $\mathcal{K}$ is discarded and a new, more informative approximation space is generated  whenever either the computed $\widehat\lambda_k$
does not provide a regularized Newton step $s_k$ such that
\begin{equation}\label{posedf_dir}
s_k^T(H_k+\widehat\lambda_k I)s_k>0,
\end{equation}
or $s_k$ fails to satisfy 
the following condition:
\begin{equation}\label{bound_p} 
    C_{{\rm low}}\|\widehat s_k\|\le  \|s_k\|\le C_{{\rm up}} \|\widehat s_k\|,
\end{equation}
where $\widehat s_k$ is the minimizer of the cubic model onto the current subspace and $C_{{\rm low}}$ and $C_{{\rm up}}$ are given positive constants. 

Notice that condition \eqref{posedf_dir} is significantly less restrictive than requiring  $H_k+\widehat\lambda_k I$ positive definite \cite{gst_yet23}. Moreover, condition (\ref{bound_p}) will be crucial to analyze the complexity of \frozen\ in section \ref{Complexity} and is similar to the one adopted in \cite[Eq. (2.1)]{curtis19}. 
Note that condition (\ref{bound_p}) is not restrictive as it only requires that the ratio $\|s_k\|/\|\widehat s_k\|$ is lower bounded away from zero and upper bounded.

\begin{algorithm}[t!b]
\caption{The Frozen AR2 (\frozen) algorithm} \label{fAR2_refresh_algo}
\begin{algorithmic}[1]
\Require  An initial point $x_0$; accuracy threshold $\epsilon \in (0,1)$;
an initial regularization parameter $\sigma_0 >0$ are given, and constants
$\eta_1, \eta_2, \gamma_1, \gamma_2,  \theta_1, \sigma_{\min} $ s.t.
$$\sigma_{\min} \in (0,\sigma_0],\ \theta_1 >0,\ 0<\eta_1\le\eta_2<1, \
0<\gamma_1 < 1 < \gamma_2, \ 0<C_{{\rm low}}<C_{{\rm up}}
$$
An integer $j_{\max}\ll n$.

\State Compute $f(x_0), g_0 = \nabla f(x_0), H_0 = \nabla^2 f(x_0)$, and set $k=0$, $\mathtt{refresh}=1$ and $V_0=[\, ]$.

 \State{\bf Step 1: Test for termination.} If $\|g_k\| \le \epsilon$, terminate.

 \State{\bf Step 2: Step computation.}
\State \hskip 0.2in Invoke Algorithm~\ref{projected_secular_algo} providing the scalar $\widehat \lambda_k$, the reduced step $\widehat s_k$, 
the basis 
\Statex \hskip 0.2in $W_{k}$ and  $V_{k+1}$ \label{call2}
\State \hskip 0.2in   {\bf if} 
$W_{k} \widehat s_k$ satisfies  \req{eq:dec2} \label{acc}
\hfill  
\State\hskip 0.4in     set $s_k = W_{k} \widehat s_k$ and {\bf goto} Step 3 \label{psub} 
\State \hskip 0.2in {\bf end if}
\State \hskip 0.2in   Compute $s_k = - ( H_k+\widehat \lambda_k I)^{-1} g_k$. \label{pnew}  \hfill   {\tt \% regularized Newton step} 
\State \hskip 0.2in     {\bf if} {$s_k^T(H_k+\widehat \lambda_k I) s_k \le 0 $ or {$\|s_k\|/\|\widehat s_k\|<C_{{\rm low}}$ or $\|s_k\|/\|\widehat s_k\|>C_{{\rm up}}$}}  \label{pd} 
\State \hskip 0.4in   {\bf if} {$\mathtt{refresh}$}\label{jmax}
\State \hskip 0.6in   \mbox{Find $ \lambda_k$ and
$s_k = -(H_k+ \lambda_k I)^{-1} g_k$ such that \eqref{eq:dec1}-\eqref{eq:dec2} hold by} 
\Statex \hskip 0.6in applying the secant method to
$\phi_R( \lambda_k; g_k, H_k, \sigma_k)=0$    
\label{secant} 
\State \hskip 0.6in   {\bf Goto} Step 3
\State \hskip 0.4in   {\bf else}
\State \hskip 0.6in     Define $x_{k+1}= x_k$, $\sigma_{k+1}= \sigma_k$, set $\mathtt{refresh}=1$, $k=k+1$\label{rejpd}
\Statex{\hskip 0.6in {\bf Goto} Step 1}
\hfill  {\tt \% unsuccessful iteration, step rejection}
\State \hskip 0.4in {\bf end if}

\State \hskip 0.2in {\bf end if}

\State {\bf Step 3:} Compute 
$$
\rho_k = \frac{f(x_k) - f(x_k+ s_k)}{T_k(0)-T_k(s_k)}.
$$
\State   {\bf Step 4:} %:  Successful iteration}
\If{$\rho_k \ge \eta_1$} \hfill  {\tt \% Successful iteration}
\State
set $x_{k+1}= x_k +  s_k$, compute $g_{k+1} = \nabla f(x_{k+1})$, $H_{k+1} = \nabla^2 f(x_{k+1})$
\Else \hfill    {\tt \% unsuccessful iteration, step rejection}
\State set $x_{k+1}= x_k$ \label{rejrho}
\EndIf

\State {\bf Step 5: Regularization parameter update.} Compute
   \begin{equation}\label{sig2bis}
   \sigma_{k+1} \in \left \{
   \begin{array}{lll}
    [\max\{\sigma_{\min}, \gamma_1 \sigma_k\}, \sigma_k  ]                   & \mbox{ if } \rho_k\geq \eta_2
      &\\ 
   \sigma_k &\mbox{ if }\rho_k\in [\eta_1,\eta_2)
      & \\
     \gamma_2 \sigma_k & \mbox{ otherwise }
      &  
   \end{array}
   \right .
    \end{equation}
   \State Increment $k$ by one, set $\mathtt{refresh}=0$ and {\bf goto} Step 1.

\end{algorithmic}
\end{algorithm}

%%%%%%%%%%
\begin{algorithm}[tb]
\caption{Minimization of $m_k$ over the low-dimensional subspace}\label{projected_secular_algo}
\begin{algorithmic}[1]
\Require The matrix $H_k$; the vector $g_k$; the matrix $V_{k}\in\mathbb{R}^{n\times d_k}$; the accuracy threshold $\theta_1>0$; the parameter $\sigma_k$;
an integer $j_{\max} \ll n$; {\tt refresh}.

\State \hskip 0.2in {\bf if} $\mathtt{refresh}$ \hskip 1.7in {\tt\% Generate new proj space}

\State \hskip 0.4in Set $V_{k+1}=[\, ]$ 

\State \hskip 0.4in {\bf for} $j=1,\ldots,j_{\max}-1$

\State \hskip 0.6in Set\footnotemark~ $W^{(j)}={\rm orth}([V_{k+1},g_k])$ \label{line_orth_W}
\State \hskip 0.6in Compute projections $g^{(j)} = (W^{(j)})^T g_k$, $\quad H^{(j)} = (W^{(j)})^T H_k W^{(j)}$
\State \hskip 0.6in Find $\widehat \lambda$ s.t. \label{froot}
{$\phi_R(\widehat \lambda; g^{(j)}, H^{(j)}, \sigma_k) =0,$ i.e.}
 \begin{equation*}
  \widehat \lambda=\sigma_k \|(H^{(j)}+\widehat \lambda I)^{-1} g^{(j)}\|
 \end{equation*}
 \State \hskip 0.6in Set
 $\widehat s = -(H^{(j)}+\widehat \lambda I)^{-1} g^{(j)}$
 \State \hskip 0.6in {\bf if} $\|\nabla m_k(W^{(j)}\widehat s) \| \le \frac{1}{2}\theta_1 \|\widehat s\|^2$
 \State \hskip 0.8in Set 
 $\widehat \lambda_k=\widehat \lambda$, $\widehat s_k=\widehat s$,  {$W_k =W^{(j)}$}  and {\bf return}
 \State \hskip 0.6in {\bf end if}
\State \hskip 0.6in Expand $V_{k+1}$ with new { basis vector}~\label{alg:expand_krylov}
\State \hskip 0.4in {\bf end for}
\State \hskip 0.4in Set 
 $\widehat \lambda_k=\widehat \lambda$, $\widehat s_k=\widehat s$,  {$W_k =W^{(j)}$}  and {\bf return} 
\State \hskip 0.2in {\bf else}
\hskip 2.2in {\tt\% Project onto the old space}
\State \hskip 0.4in Set $W^{(\widehat \jmath)}= {\rm orth}([V_{k}, g_k])$, where $\widehat \jmath=d_k+1$ %
\State \hskip 0.4in Compute projections $g^{(\widehat \jmath)} = (W^{(\widehat \jmath)})^T g_k$, $\quad H^{(\widehat \jmath)} = (W^{(\widehat \jmath)})^T H_k \REV{W^{(\widehat \jmath)}}$ 
\State \hskip 0.4in Find $\widehat \lambda$ s.t. \label{froot2}
{$\phi_R(\widehat \lambda; g^{(\widehat \jmath)}, H^{(\widehat \jmath)}, \sigma_k) =0,$ i.e.}
 \begin{equation*}
  \widehat \lambda=\sigma_k \|(H^{(\widehat \jmath)}+\widehat \lambda I)^{-1} g^{(\widehat \jmath)}\|
 \end{equation*}
 \State \hskip 0.4in Set
 $\widehat s_k = -(H^{(\widehat \jmath)}+\widehat \lambda I)^{-1} g^{(\widehat \jmath)}$, $\quad \widehat \lambda_k=\widehat \lambda$
 \State \hskip 0.4in Set $V_{k+1}=V_{k}$ and {$W_k= W^{(\widehat \jmath)}$}
 \REV{and {\bf return} }

\State \hskip 0.2in {\bf end if}

\end{algorithmic}
\end{algorithm}
The overall \frozen\ procedure is described in Algorithm~\ref{fAR2_refresh_algo} and Algorithm~\ref{projected_secular_algo}.
The outcomes of Algorithm~\ref{projected_secular_algo} are the regularization parameter  $\widehat \lambda_k$,  the step $\widehat s_k$, the matrix $W_k$ storing  the basis of the  subspace used to carry out 
the minimization process needed to generate $\widehat\lambda_k$ and $\widehat s_k$.
 The matrix $W_k$ is such that $\text{Range}(W_k)=\text{Range}([g_k,V_{k+1}])$. The columns of the matrix $V_{k+1}$ represent an orthonormal basis of the current subspace $\mathcal{K}$ and they are computed  from scratch whenever   the input parameter {\tt refresh} is equal to one. The minimization is 
carried out over the  augmented subspace spanned by $W_k$
to exactly project the current gradient $g_k$. 
The input parameter $j_{\max}\ll n$ rules the maximum allowed dimension of the subspace spanned by $V_{k+1}$. 

The reduced step $\widehat s_k$   
is the exact global minimizer of $m_k(s)$ onto the subspace
generated by  $W_{k}$. Moreover, the regularization parameter  has the following form:
\begin{equation} \label{lambdak}
\widehat \lambda_k =  \sigma_k \| \widehat s_k\|.
\end{equation}
In Line~\ref{call2} of Algorithm~\ref{fAR2_refresh_algo} 
 the minimizer $\widehat s_k$ of the cubic model onto the current subspace and the regularization parameter $\widehat \lambda_k$  are computed by invoking Algorithm~\ref{projected_secular_algo}. If $W_k\widehat s_k$ satisfies~\req{eq:dec2}, we set $s_k=W_k\widehat s_k$ and proceed with Step 3 while the current $\mathcal{K}$ is kept frozen. Otherwise, in Line~\ref{pnew}, the regularized Newton step $s_k=-(H_k+\widehat \lambda_kI)^{-1}g_k$ is computed and used in Step 3 whenever the
conditions \eqref{posedf_dir} and \eqref{bound_p} hold.
If one of these conditions is not satisfied, $s_k$ is rejected. In this scenario the current subspace is refreshed if it comes from previous iterations and the subspace minimization strategy is attempted once again. Otherwise, namely if $\mathcal{K}$ has just been refreshed, a new step $s_k$ 
satisfying \eqref{eq:dec1}-\eqref{eq:dec2} is computed by approximately solving the secular equation $\phi_R(\lambda;g_k,H_k,\sigma_k)=0$. The latter equation is solved (see \eqref{eq:sec}) by the secant method  (\cite{grt10} and \cite[Chapter 9]{book_compl}) equipped with suitable stopping criteria; see \cite[Theorem 9.3.2]{book_compl}.
The flag {\tt refresh} rules the computation of
a new subspace.  A new subspace is computed whenever {\tt refresh} is 1. The flag
{\tt refresh}
 is set to 1  at the first iteration and   whenever  the following  situation occurs
 \vskip 0.1in
 \begin{description}[align=left,style=multiline,labelindent=0.7cm]
 \item[$\clubsuit$] {\tt refresh} is currently set to $0$, the minimization of the model in the current subspace does not satisfy \eqref{eq:dec2}, and either \eqref{posedf_dir} or \eqref{bound_p}  does not hold. 
 \end{description} 
 \vskip 0.1in
One iteration is declared unsuccessful when $\clubsuit$ occurs or, as in classic adaptive methods, when the ratio $\rho_k $ in \eqref{eq_rho} is smaller than the input parameter $\eta_1\in (0,1)$. 
Note that, in case of $\clubsuit$, 
 the parameter $\sigma_k$ is left unchanged and a new subspace is computed at the subsequent iteration. We refrain from increasing $\sigma_k$ in this case as the failure is not ascribed to an unsatisfactory model but rather to a poor subspace. On the contrary, in case the unsuccessful iteration is due to $\rho_k<\eta_1$, the parameter $\sigma_k$ is increased.
The flag {\tt refresh} is set to 0 in Step 5 as this step is executed only if either \eqref{eq:dec2} holds or both \eqref{posedf_dir} and \eqref{bound_p} are satisfied. In this case we keep $\mathcal{K}$ frozen.

We observe that when the condition in Line \ref{acc} of Algorithm \ref{projected_secular_algo} is met, that is when the subspace minimization provides a reduced step $\widehat s_k$ such that
$W_{k}\widehat s_k$ satisfies~\req{eq:dec2}, then the computation of the regularized Newton step and the consequent 
factorization of the shifted Hessian matrix in the full space are not needed.

\footnotetext{Here and in the following, {\rm orth} defines a procedure that updates a matrix having orthonormal columns by adding a new column and orthogonalizing it with respect to the previous ones.}

We would like to point out the different nature of the step $s_k$ in the successful iterations. If the step is computed in Line \ref{psub} of Algorithm \ref{fAR2_refresh_algo}, 
then it is such that \req{eq:dec2} holds 
and $\|s_k\|=\|\widehat s_k\|$ since $W_k$ has orthonormal columns. 
On the other hand, if it is  computed in Line \ref{pnew} of Algorithm \ref{fAR2_refresh_algo}, 
it ensures a decrease of the quadratic regularized model
\begin{equation}\label{defmQ}
 m^Q_k(s)\eqdef T_k(s) + \frac{1}{2}\widehat \lambda_k \|s\|^2,
 \end{equation}
with $\widehat \lambda_k =  \sigma_k \|\widehat s_k\|,$
as the following inequality holds:
\begin{equation}\label{ineqmq}
m^Q_k(s_k) = T_k(s_k) + \frac{1}{2} \sigma_k \|\widehat s_k\| \|s_k\|^2
< m^Q_k(0).
\end{equation}
Indeed,
{\begin{equation}\label{ineqmq2}
m^Q_k(s_k) = f_k + s_k^Tg_k +\frac{1}{2} s_k^T(H_k + \hat \lambda_k I) s_k,
\end{equation}
and by \eqref{posedf_dir}   and 
$s_k^T(H_k + \hat \lambda_k I)s_k = -s_k^T g_k$, we get 
\begin{equation}\label{ineqmq3}
m^Q_k(s_k) - m^Q_k(0) =  s_k^Tg_k -\frac{1}{2} s_k^T g_k = \frac{1}{2} s_k^T g_k <0.
\end{equation}
Note that, in case $H_k+\widehat \lambda_k I$ is  positive definite, the step $s_k$  is the exact global minimizer of the quadratic regularized model $m^Q_k(s )$. }

When $s_k$ is computed in  Line \ref{secant} of Algorithm \ref{fAR2_refresh_algo} it is still an approximate  minimizer of the cubic model satisfying \eqref{eq:dec1}-\eqref{eq:dec2}.

We finally stress that condition \eqref{bound_p} holds whenever there exist
$0<\theta_{\min}<\theta_{\max}$ such that $\lambda_i(H_k+\widehat\lambda_k I)^{-2}\in [\theta_{\min},\theta_{\max}]$, for $i=1,\ldots,n$.
Indeed, we have
$$\|s_k\|^2=g_k^T(H_k+\widehat\lambda_k I)^{-2}g_k,$$
so that
$$\theta_{\min}\|g_k\|^2\leq\|s_k\|^2\leq
\theta_{\max}\|g_k\|^2.
$$
Similarly, since $g_k$ is always included in  the subspace employed to compute $\widehat s_k$, it holds
$$\widehat \theta_{\min}\|g_k\|^2\leq\|\widehat s_k\|^2\leq
\widehat \theta_{\max}\|g_k\|^2,
$$
where $\widehat\theta_{\min}$ and $\widehat\theta_{\max}$ denote the smallest and largest eigenvalues of $(\widehat H_k+\widehat\lambda_k I)^{-2}$, respectively. Notice that $\widehat\theta_{\min}$ and $\widehat\theta_{\max}$ are guaranteed to be strictly positive as $\widehat\lambda_k$ solves the reduced secular equation so that $\widehat H_k+\widehat\lambda_kI$ is certainly positive definite. 
Therefore,
$$\|g_k\|^2\leq\frac{\|\widehat s_k\|^2}{\widehat \theta_{\min}}, \qquad \frac{\|\widehat s_k\|^2}{\widehat \theta_{\max}}\leq\|g_k\|^2.
$$
By putting everything together, we have
$$\frac{\theta_{\min}}{\widehat \theta_{\max}} \|\widehat s_k\|^2\leq\|s_k\|^2\leq \frac{\theta_{\max}}{\widehat \theta_{\min}}
\|\widehat s_k\|^2.
$$

\section{Complexity analysis  of finding first-order optimality points}\label{Complexity}
Given $\epsilon>0$ we provide an upper bound on the number of iterations needed by \frozen\ to compute an $\epsilon$-approximate first-order optimality point (see~\eqref{fomin}).
To this end, we make the following standard assumptions on the optimization problem~\req{eq:pb}.
\vskip 0.1in
\begin{description}[align=left,style=multiline]
\item[AS.1] $f(x)$ is twice continuously differentiable in $\IR^n$, and its gradient $\nabla f(x)$ and its Hessian $\nabla^2 f(x)$ are Lipschitz continuous on $\IR^n$ with Lipschitz constants $L_1$ and $L_2$, respectively.

\item[AS.2] $f(x)$ is bounded from below in $\IR^n$, that is there exists a constant $f_{\rm low}$ such that $f(x) \ge f_{\rm low}$ for all $x \in \IR^n$.
\end{description}

\vskip 0.1in
We recall that, given $x_k, s_k \in \IR^n$ and by letting $T_k(s_k)$ be the second-order Taylor approximation of $f(x_k+s_k)$ around $x_k$, Assumption \textbf{AS.1} implies the following 
\begin{equation}\label{tay}
 |f(x_k+s_k)-T_k(s_k)| \le \frac{L_2}{6}\|s_k\|^3,
\end{equation}
 see, e.g., \cite[Corollary A.8.4]{book_compl}.

Let us denote by ${\cal I}_{RN}$ the set of indexes of iterations such that the \REV{tentative} step $s_k$ %used in Step 3
has been computed in Line \ref{pnew} of Algorithm \ref{fAR2_refresh_algo}, i.e., it is a regularized Newton step. 
We observe that at
any successful iteration $k\not \in {\cal I}_{RN}$ the step $s_k$ is the step   used in the classic  adaptive regularization method\footnote{It is an approximate minimizer satisfying \eqref{eq:dec1}-\eqref{eq:dec2}  both when it is computed in Line 6 and  in Line 11.} employing the  second order model. Then, we can rely on the  theoretical results given in 
\cite[Section 3.3]{book_compl}  for these iterations and prove analogous results for iterations in ${\cal I}_{RN}$. Our analysis  follows  the classic path for proving worst-case complexity of adaptive regularized methods despite a regularized Newton step is used.
We also observe that condition~\eqref{bound_p} holds at any successful iteration  $k\in {\cal I}_{RN}$ and that, as already noticed,  $\sigma_k$ is left unchanged at any unsuccessful iteration in 
$ {\cal I}_{RN}$ where \eqref{bound_p} does not hold.

We are now going to 
establish a key  lower bound on the Taylor-series model
decrease. In case $k\not \in {\cal I}_{RN}$, the classic lower-bound of AR2 methods holds. Remarkably, the lower bound depends on $\|s_k\|^3$ also for $k\in {\cal I}_{RN}$ provided that \eqref{bound_p} holds.
This lower bound is crucial to prove the optimal complexity of our procedure and it holds as $\hat \lambda_k$ is the minimizer of the cubic model in the low-dimensional subspace yielding $\hat \lambda_k=\sigma_k\|\hat s_k\|$.

\begin{lemma}{(\bf Decrease in the Taylor-series model)}
 At every successful  iteration $k$ of the \frozen\ algorithm, it holds that 
\begin{equation}\label{eq:diffT0}
 T_k(0) - T_k( s_k) > \frac{1}{2} \sigma_k \|\widehat s_k\| \|s_k\|^2,\quad\quad\quad  \mbox{for }  k \in {\cal I}_{RN},
\end{equation}
and 
\begin{equation}\label{eq:diffT1}
 T_k(0) - T_k( s_k) > \frac{1}{3} \sigma_k  \|s_k\|^3,\quad\quad\quad  \mbox{for }  k \notin {\cal I}_{RN}.
\end{equation}

\end{lemma}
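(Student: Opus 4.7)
The plan is to split the argument according to whether the accepted step at iteration $k$ is a regularized Newton step ($k\in\mathcal{I}_{RN}$, computed in Line~\ref{pnew} of Algorithm~\ref{fAR2_refresh_algo}) or an approximate minimizer of the full cubic model ($k\notin\mathcal{I}_{RN}$, computed either by the subspace minimization in Line~\ref{psub} or by the secant procedure in Line~\ref{secant}). The two cases require very different arguments because in the first case $s_k$ is \emph{not} an approximate minimizer of $m_k$, so~\eqref{eq:dec1} cannot be invoked directly.

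\paragraph{Case $k\notin\mathcal{I}_{RN}$.} This is the standard AR2 situation. Regardless of whether $s_k=W_k\widehat s_k$ is produced by the subspace minimization or by the secant method, condition~\eqref{eq:dec1} is guaranteed (in the subspace case, it holds because the reduced step minimizes $m_k$ over a subspace that contains the origin; in the secant case, it is enforced by the stopping criterion). Unfolding~\eqref{eq:dec1} with the definition~\eqref{eq:cubmod} of $m_k$ gives
\[
T_k(s_k) + \tfrac{1}{3}\sigma_k\|s_k\|^3 \;=\; m_k(s_k) \;<\; m_k(0) \;=\; T_k(0),
\]
which rearranges to~\eqref{eq:diffT1}.

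\paragraph{Case $k\in\mathcal{I}_{RN}$.} Here I would lean entirely on the identities~\eqref{ineqmq2}--\eqref{ineqmq3} already established in the excerpt, which express the quadratic-regularized model decrease of the regularized Newton step $s_k=-(H_k+\widehat\lambda_k I)^{-1}g_k$. From $(H_k+\widehat\lambda_k I)s_k=-g_k$ one has $s_k^T(H_k+\widehat\lambda_k I)s_k=-s_k^Tg_k$, and~\eqref{posedf_dir} forces $s_k^Tg_k<0$. Substituting into~\eqref{ineqmq3} and using $\widehat\lambda_k=\sigma_k\|\widehat s_k\|$ via~\eqref{lambdak} yields
\[
T_k(0) - T_k(s_k) \;=\; \tfrac{1}{2}\widehat\lambda_k\|s_k\|^2 - \tfrac{1}{2}s_k^Tg_k \;>\; \tfrac{1}{2}\widehat\lambda_k\|s_k\|^2 \;=\; \tfrac{1}{2}\sigma_k\|\widehat s_k\|\|s_k\|^2,
\]
which is exactly~\eqref{eq:diffT0}.

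\paragraph{Main obstacle.} The only subtlety is the second case: one must resist the temptation to treat $s_k$ as a cubic-model minimizer and instead exploit that $\widehat\lambda_k$ inherited from the subspace minimization plays the role of $\sigma_k\|\widehat s_k\|$ in a \emph{quadratic} regularized model. The strict inequality in~\eqref{eq:diffT0}, which is what later complexity arguments need, hinges crucially on the strictness of~\eqref{posedf_dir}; a nondegeneracy assumption on $g_k$ at non-terminated iterations (guaranteed by Step~1) ensures $s_k\neq 0$ and so the strict sign holds.
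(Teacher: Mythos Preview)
Your proof is correct and follows essentially the same approach as the paper. For $k\in\mathcal{I}_{RN}$ the paper phrases the computation through the quadratic model $m_k^Q$ and the inequality~\eqref{ineqmq}, while you unfold that same identity explicitly to reach $T_k(0)-T_k(s_k)=\tfrac12\widehat\lambda_k\|s_k\|^2-\tfrac12 s_k^Tg_k$; for $k\notin\mathcal{I}_{RN}$ the paper simply cites \cite[Lemma~3.3.1]{book_compl}, whose content is exactly the one-line rearrangement of~\eqref{eq:dec1} you wrote out.
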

\begin{proof} Let us consider the case $ k \in {\cal I}_{RN}$. 
Inequality \eqref{ineqmq} yields 
$$
 T_k(0) - T_k( s_k) = m^Q_k (0) - m^Q_k(s_k) + \frac{1}{2} \sigma_k \|\widehat s_k\| \|s_k\|^2 \ge\frac{1}{2} \sigma_k \|\widehat s_k\| \|s_k\|^2.
$$
In case  $ k \not \in {\cal I}_{RN}$, the result follows from \cite[Lemma~3.3.1]{book_compl}.
\end{proof}

\begin{lemma}{(\bf Upper bound on the regularization parameter)}
Suppose that Assumption {\rm\textbf{AS.1}} holds.
At each  iteration  of the \frozen\ algorithm  it holds
\begin{equation}\label{eq:bsigma}
\sigma_k \le  \sigma_{\max} \eqdef \max \left\{ \sigma_0, \gamma_2 \frac{ L_2C_{{\rm up}}}{3(1-\eta_1)}, \gamma_2 \frac{ L_2}{2(1-\eta_1)} \right \},
\end{equation} 
where $\gamma_2$ and $\eta_1$ are given in the algorithm. 
\end{lemma}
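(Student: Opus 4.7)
The plan is to follow the classical induction argument for adaptive regularization schemes: establish a threshold $\bar\sigma$ such that whenever $\sigma_k\ge \bar\sigma$ the ratio test forces $\rho_k\ge\eta_1$, and then use the update rule~\eqref{sig2bis} to conclude that $\sigma$ cannot grow beyond $\gamma_2\bar\sigma$. The only subtlety with respect to the textbook \arc\ argument is that two distinct families of trial steps can reach Step~3 of Algorithm~\ref{fAR2_refresh_algo}, each yielding a different lower bound on the Taylor-model decrease.

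The core computation is the estimate
$$
|1-\rho_k|=\frac{|f(x_k+s_k)-T_k(s_k)|}{T_k(0)-T_k(s_k)}
\le\frac{L_2\|s_k\|^3/6}{T_k(0)-T_k(s_k)},
$$
obtained by combining~\eqref{tay} with the positivity of the model decrease guaranteed by the previous lemma. I would then split according to the provenance of $s_k$. If $s_k$ is produced in Line~\ref{psub} of Algorithm~\ref{fAR2_refresh_algo} or via the secant fallback in Line~\ref{secant}, then $k\notin\mathcal{I}_{RN}$ and \eqref{eq:diffT1} yields $|1-\rho_k|\le L_2/(2\sigma_k)$. If instead $k\in\mathcal{I}_{RN}$, then having reached Step~3 means that~\eqref{bound_p} is in force, so $\|\widehat s_k\|\ge\|s_k\|/C_{\rm up}$; plugging this into~\eqref{eq:diffT0} gives $T_k(0)-T_k(s_k)>\sigma_k\|s_k\|^3/(2C_{\rm up})$, whence $|1-\rho_k|\le L_2 C_{\rm up}/(3\sigma_k)$.

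Imposing that each of the two estimates be at most $1-\eta_1$ produces exactly the thresholds $L_2/(2(1-\eta_1))$ and $L_2 C_{\rm up}/(3(1-\eta_1))$ appearing in~\eqref{eq:bsigma}: beyond the larger of the two, every iteration reaching Step~3 is successful. A simple induction on $k$ starting from $\sigma_0$ then concludes, once one observes that $\sigma_k$ can be multiplied by $\gamma_2$ \emph{only} through the $\rho_k<\eta_1$ branch of~\eqref{sig2bis}; the unsuccessful iterations triggered by $\clubsuit$ (Line~\ref{rejpd}) leave $\sigma_k$ unchanged and therefore never cross the threshold upward without being damped at the next step.

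The step I expect to require the most care, and which represents the conceptual departure from the standard \arc\ proof, is the regularized-Newton branch: by itself, \eqref{eq:diffT0} provides a decrease that is only quadratic in $\|s_k\|$ multiplied by $\|\widehat s_k\|$, and it is precisely the calibration~\eqref{bound_p} between $s_k$ and its low-dimensional counterpart $\widehat s_k$ that converts this into a cubic-in-$\|s_k\|$ bound of the same shape as~\eqref{eq:diffT1}. The constant $C_{\rm up}$ therefore enters the final estimate~\eqref{eq:bsigma} in an asymmetric way, while $C_{\rm low}$ plays no role here (it will matter instead in the subsequent complexity count, not in bounding $\sigma_k$).
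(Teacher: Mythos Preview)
Your proposal is correct and follows essentially the same route as the paper: the same case split between $k\in\mathcal{I}_{RN}$ and $k\notin\mathcal{I}_{RN}$, the same use of~\eqref{tay} together with~\eqref{eq:diffT0}--\eqref{eq:diffT1} to bound $|1-\rho_k|$, the same application of~\eqref{bound_p} to convert the mixed $\|\widehat s_k\|\,\|s_k\|^2$ decrease into a cubic one, and the same observation that $\clubsuit$-type rejections leave $\sigma_k$ unchanged. Your induction argument and the remark that only $C_{\rm up}$ (not $C_{\rm low}$) enters here are exactly in line with the paper's reasoning.
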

Let us denote by ${\cal I}_{RN}$ the set of indexes of iterations such that the step $s_k$ used in Step 3 has been computed in Line \ref{pnew} of Algorithm \ref{fAR2_refresh_algo}, i.e., it is a regularized Newton step. 
We observe that at
any successful iteration $k\not \in {\cal I}_{RN}$ the step $s_k$ is the step   used in the classic  adaptive regularization method

\begin{proof}
\REV{For any $ k \not \in {\cal I}_{RN}$, we recall that $s_k$ is the step used in the classic adaptive regularization method.
Hence, by exploiting
} (\ref{eq:diffT1})  and proceeding as in   \cite[Lemma 3.3.2]{book_compl} we can show that 
the iteration is successful whenever  
$$
\sigma_k\ge   \frac{ L_2}{2(1-\eta_1)}.
$$
In case  $ k \in {\cal I}_{RN}$, \REV{since the step $s_k$  is the regularized Newton step},   
the following two situations can occur:
\vskip 0.05in
\begin{description}[align=left,style=multiline,labelindent=0.7cm]
     \item[a)]  Both conditions \eqref{posedf_dir} and \eqref{bound_p} hold.
     \item[b)] Either  \eqref{posedf_dir} or \eqref{bound_p} does not hold. The iteration is declared unsuccessful in Line \ref{rejpd} of Algorithm \ref{fAR2_refresh_algo}.  {\tt refresh} is set to $1$ and $\sigma_k$ is left unchanged.
\end{description}
\vskip 0.05in
Let us consider case \textbf{a)}. The ratio $\rho_k$ is evaluated in Step 5 and 
from its definition  we have
\begin{eqnarray*}
 |\rho_k -1 | &=& \frac{|f(x_k) - f(x_k+ s_k) - T_k(0)+T_k(s_k)|}{T_k(0)-T_k(s_k)} \\
 &=& \frac{| f(x_k+ s_k) - T_k(s_k)|}{T_k(0)-T_k(s_k)} \\
  & \le & \frac{ L_2 \|s_k\|^3}{ 3\sigma_k \|\widehat s_k\|  \|s_k\|^2  } \\
  & = & \frac{ L_2 \|s_k\|}{ 3 \sigma_k \|\widehat s_k\|},
\end{eqnarray*}
where we used the inequalities (\ref{tay}) and (\ref{eq:diffT0}).
Therefore, by using  (\ref{bound_p}), we can conclude that  if  $k  \in {\cal I}_{RN}$ and case \textbf{a)} occurs, then   
$$
\sigma_k\ge   \frac{ L_2C_{{\rm up}}}{3(1-\eta_1)},
$$
which implies $\rho_k>\eta_1$, i.e.,    the iteration is successful.
Therefore, thanks to the mechanism of the algorithm we have $$
\sigma_k\le \gamma_2 \frac{ L_2C_{{\rm up}}}{3(1-\eta_1)},
$$ for any $k  \in {\cal I}_{RN}$ such that  case \textbf{a)} holds. 

Finally,  unsuccessful iterations $k  \in {\cal I}_{RN}$ such that  case \textbf{b)} holds  do not yield an increase in $\sigma_k $.

Therefore, $\sigma_k$ cannot be larger than $\sigma_{\max}$,
and the bound in (\ref{eq:bsigma}) follows.
\end{proof}

\REV{In the following lemma we prove the lower bound \eqref{eq:stepgrad} on the norm of the step $\widehat s_k$ in case  $ k \in {\cal I}_{RN}$. To this end we exploit the fact the regularized Newton step  $s_k$ 
provides $\| H_k s_k+g_k\|$ of the order of $\|\widehat s_k\|^2$, thanks to the form of 
regularization term $\widehat \lambda_k$ given in  \eqref{lambdak}.
The lower  bound \eqref{eq:stepgrad}, along with \eqref{eq:diffT0}-\eqref{eq:diffT1}, paves the way for
proving the optimal complexity result.}

 \begin{lemma} \label{lemma_decrease}
Suppose that Assumption {\rm\textbf{AS.1}} holds. Then,
at   any successful iteration $k$  of the \frozen\ algorithm, if  $ k \in {\cal I}_{RN}$  then
\begin{equation} \label{eq:stepgrad}
 \|\widehat s_k\|^2\ge \frac {2}{C_{\rm up}(L_1 C_{\rm up}+2\sigma_{\max})} \|g_{k+1}\|.
\end{equation}
Moreover, at any  successful iteration, the following inequality holds
\begin{equation}\label{decrease}
f(x_k)-f(x_{k+1})\ge \eta_1\sigma_{\min}\min\left \{\frac{1}{2} C_{\rm low}^2 \kappa_0,   \frac{1}{3} \kappa_1\right\} \, \|g_{k+1}\|^{3/2},
\end{equation}
where  $\kappa_0=\left( \frac{2}{C_{\rm up} (L_1 C_{\rm up}+2\sigma_{\max})}\right)^{3/2}$,  $\kappa_1= \left( \frac{2}{L_1+\theta_1+\sigma_{\max}}\right)^{3/2}$.
\end{lemma}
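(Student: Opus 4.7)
My plan is to prove \eqref{eq:stepgrad} first, and then feed it, together with the Taylor-model decrease bounds already established in \eqref{eq:diffT0}--\eqref{eq:diffT1}, into the successful-iteration inequality $f(x_k)-f(x_{k+1}) \ge \eta_1(T_k(0)-T_k(s_k))$ to obtain \eqref{decrease}. A preliminary observation that I will repeatedly exploit is that whenever $k \in {\cal I}_{RN}$ and iteration $k$ is successful, both \eqref{posedf_dir} and \eqref{bound_p} must hold, since otherwise the algorithm would have branched at Line~\ref{pd} and either invoked the secant routine or declared the iteration unsuccessful.

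For \eqref{eq:stepgrad} I would start from the regularized Newton identity $(H_k+\widehat\lambda_k I)s_k=-g_k$, rewritten as $H_k s_k + g_k = -\widehat\lambda_k s_k$, and combine it with the Taylor-type bound $\|g_{k+1}-g_k - H_k s_k\| \le \tfrac{L_1}{2}\|s_k\|^2$ inherited from AS.1. The triangle inequality then yields $\|g_{k+1}\| \le \tfrac{L_1}{2}\|s_k\|^2 + \widehat\lambda_k\|s_k\|$. Plugging in $\widehat\lambda_k=\sigma_k\|\widehat s_k\|\le\sigma_{\max}\|\widehat s_k\|$ from \eqref{lambdak} and \eqref{eq:bsigma}, together with $\|s_k\|\le C_{\rm up}\|\widehat s_k\|$ from \eqref{bound_p}, collapses the right-hand side to $\tfrac{1}{2}C_{\rm up}(L_1 C_{\rm up}+2\sigma_{\max})\|\widehat s_k\|^2$, and rearranging produces \eqref{eq:stepgrad}.

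For \eqref{decrease} I will split on whether $k \in {\cal I}_{RN}$. In the RN case I combine \eqref{eq:diffT0} with $\sigma_k\ge\sigma_{\min}$ and the lower bound $\|s_k\| \ge C_{\rm low}\|\widehat s_k\|$ supplied by \eqref{bound_p} to get $T_k(0)-T_k(s_k) \ge \tfrac{1}{2}\sigma_{\min}C_{\rm low}^2\|\widehat s_k\|^3$; raising \eqref{eq:stepgrad} to the $3/2$ power then gives $\|\widehat s_k\|^3 \ge \kappa_0\|g_{k+1}\|^{3/2}$, which delivers the $\kappa_0$ branch of \eqref{decrease}. In the complementary case I would instead use \eqref{eq:diffT1} and produce a lower bound $\|s_k\|^3 \ge \kappa_1\|g_{k+1}\|^{3/2}$ by taking norms in the identity $g_{k+1} = (g_{k+1}-g_k-H_k s_k) + \nabla m_k(s_k) - \sigma_k\|s_k\| s_k$ and bounding the three terms via AS.1, \eqref{eq:dec2}, and \eqref{eq:bsigma}. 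Taking the minimum of the two resulting $\|g_{k+1}\|^{3/2}$-coefficients then assembles \eqref{decrease}.

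The main obstacle is likely the bookkeeping of constants in the non-RN branch, where three separate error sources (the Taylor remainder, the inexact-minimization tolerance $\theta_1$, and the cubic contribution $\sigma_k\|s_k\|$) all contribute to the upper bound on $\|g_{k+1}\|$, and these must be combined so that the final coefficient matches the definition of $\kappa_1$. A smaller but related point is checking that \eqref{bound_p} really is available when I invoke it in the RN case, but this is immediate because any RN iteration that reaches Step~3 must already have passed the test in Line~\ref{pd}.
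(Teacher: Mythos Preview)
Your proposal is correct and follows essentially the same route as the paper's proof: the derivation of \eqref{eq:stepgrad} via the triangle inequality applied to $g_{k+1}=(g_{k+1}-g_k-H_ks_k)-\widehat\lambda_k s_k$, and the RN branch of \eqref{decrease} via \eqref{eq:diffT0} together with $\|s_k\|\ge C_{\rm low}\|\widehat s_k\|$, are identical to what the paper does. For the non-RN branch the paper simply invokes \cite[Lemma~3.3.3]{book_compl} to obtain $\|g_{k+1}\|\le\tfrac{1}{2}(L_1+\theta_1+\sigma_{\max})\|s_k\|^2$, whereas you re-derive it from scratch via the decomposition $g_{k+1}=(g_{k+1}-g_k-H_ks_k)+\nabla m_k(s_k)-\sigma_k\|s_k\|s_k$; this is exactly the argument behind that lemma, so the two proofs coincide. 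One small caveat on the bookkeeping you flagged: bounding the three pieces by $\tfrac{L_1}{2}\|s_k\|^2$, $\tfrac{\theta_1}{2}\|s_k\|^2$, and $\sigma_{\max}\|s_k\|^2$ yields $\tfrac{1}{2}(L_1+\theta_1+2\sigma_{\max})\|s_k\|^2$, i.e.\ a harmless extra factor of $2$ on $\sigma_{\max}$ compared with the constant quoted from the book; this does not affect the argument or the complexity order.
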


\begin{proof}
Let us consider the case $ k  \in {\cal I}_{RN}$. By  using the definition of $s_k$ in
Line~\ref{pnew} of Algorithm \ref{fAR2_refresh_algo},~\eqref{bound_p}, \eqref{eq:bsigma}, and the form of $\hat \lambda_k$ given in \eqref{lambdak},  it follows that
\begin{eqnarray} \label{bound_p1}
\|g_{k+1}\|& \le &\|g_{k+1}-(H_k s_k+g_k)\|+\| H_k s_k+g_k\|  \notag\\
&\le& \frac{L_1}{2}   \|s_k\|^2+ \sigma_k \|\widehat s_k\|\|s_k\|\notag\\
&\le& C_{\rm up} \left (\frac{L_1}{2} C_{\rm up}+\sigma_{\max} \right)   \|\widehat s_k\|^2.
\end{eqnarray}
Then, \eqref{eq:diffT0} yields
\begin{eqnarray*}
f(x_k)-f(x_{k+1})&\ge& \eta_1 (T_k(0)-T_k(s_k)) \\
&\ge& \frac{\eta_1}{2} \sigma_k \|\widehat s_k\|\|s_k\|^2 
\ge \frac{\eta_1}{2} \sigma_{\min} C_{\rm low}^2  \|\widehat s_k\|^3 \\
&\ge& \frac{\eta_1}{2} \sigma_{\min} C_{\rm low}^2 \kappa_0 \|g_{k+1}\|^{3/2},
\end{eqnarray*}
where $\kappa_0\eqdef \left( \frac{2}{C_{\rm up} (L_1 C_{\rm up}+2\sigma_{\max})}\right)^{3/2}$.

In case $ k \not \in {\cal I}_{RN}$, Lemma 3.3.3 in  \cite{book_compl} ensures that the step $s_k$ computed in Line~\ref{psub} of Algorithm \ref{fAR2_refresh_algo} satisfies:
$$
\|g_{k+1}\|\le\frac {L_1+\theta_1+\sigma_{\max}} { 2} \|s_k\|^2,
$$
and from  \eqref{eq:diffT1} it follows that
$$f(x_k)-f(x_{k+1})\ge \eta_1 (T_k(0)-T_k(s_k)) \ge \eta_1 \frac{1}{3} \sigma_k\|s_k\|^3 \ge  \eta_1  \frac{1}{3} \sigma_{\min} \kappa_1 \|g_{k+1}\|^{3/2},
$$
where $\kappa_1\eqdef \left( \frac{2}{L_1+\theta_1+\sigma_{\max}}\right)^{3/2}$.
\end{proof}

Let us denote by ${\cal S}_k$ the set of indexes of successful iterations up to iteration $k$ generated by the \frozen\ method detailed in Algorithms \ref{fAR2_refresh_algo}-\ref{projected_secular_algo}. Then, the following result allows us to bound the number of iterations in terms of the number of successful iterations provided that 
the regularization parameter is bounded from above.
 
\begin{lemma} \label{bound_it_S}  Suppose that  the sequence $\{x_k\}$ is generated by the \frozen\ algorithm. Assume  that $\sigma_k\le \sigma_{\max}$ for some 
$\sigma_{\max}>0$. Then, 
$$
k\le \left( |{\cal S}_k|\left(3/2+\frac{3/2|\log \gamma_1|}{\log \gamma_2}\right)+\frac{3/2}{\log \gamma_2}\log \left (\frac{\sigma_{\max}}{\sigma_0}\right)\right) .
$$
\end{lemma}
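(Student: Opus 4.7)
The plan is to partition the iterations $0,1,\dots,k-1$ into three disjoint classes according to the exit point in Algorithm \ref{fAR2_refresh_algo}: the successful iterations $\mathcal{S}_k$ (those reaching Step 3 with $\rho_j\geq\eta_1$), the $\rho$-type unsuccessful iterations $\mathcal{U}^\rho_k$ (rejected at Line \ref{rejrho} because $\rho_j<\eta_1$), and the $\clubsuit$-type unsuccessful iterations $\mathcal{U}^\clubsuit_k$ (rejected at Line \ref{rejpd}). Then $k=|\mathcal{S}_k|+|\mathcal{U}^\rho_k|+|\mathcal{U}^\clubsuit_k|$, and the estimate reduces to bounding each of the last two terms in terms of $|\mathcal{S}_k|$ and $\log(\sigma_{\max}/\sigma_0)$.

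The first step is the classical dynamics-of-$\sigma$ argument, which controls $|\mathcal{U}^\rho_k|$. The update rule \eqref{sig2bis} ensures $\sigma_{j+1}\geq\gamma_1\sigma_j$ for $j\in\mathcal{S}_k$, $\sigma_{j+1}=\gamma_2\sigma_j$ for $j\in\mathcal{U}^\rho_k$, and $\sigma_{j+1}=\sigma_j$ for $j\in\mathcal{U}^\clubsuit_k$ (cf.\ Line \ref{rejpd}, where $\sigma$ is left untouched). Multiplying these inequalities along the run yields $\sigma_k\geq\sigma_0\gamma_1^{|\mathcal{S}_k|}\gamma_2^{|\mathcal{U}^\rho_k|}$. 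Combining with the hypothesis $\sigma_k\leq\sigma_{\max}$, taking logarithms and using $\log\gamma_1<0<\log\gamma_2$, one obtains
\[
|\mathcal{U}^\rho_k|\leq \frac{1}{\log\gamma_2}\!\left(|\mathcal{S}_k|\,|\log\gamma_1|+\log\frac{\sigma_{\max}}{\sigma_0}\right).
\]

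The second and most delicate step is a structural bound on $|\mathcal{U}^\clubsuit_k|$: because $\clubsuit$ iterations do not modify $\sigma$, they are invisible to the previous argument and must be controlled purely by bookkeeping on the $\mathtt{refresh}$ flag. A $\clubsuit$-iteration requires $\mathtt{refresh}=0$ at its start and, upon the rejection at Line \ref{rejpd}, sets $\mathtt{refresh}=1$; conversely, whenever $\mathtt{refresh}=1$ at the start of an iteration, Algorithm \ref{projected_secular_algo} rebuilds the subspace and the if-branch at Line \ref{jmax} of Algorithm \ref{fAR2_refresh_algo} guarantees that Step 3 is reached via either the subspace acceptance at Line \ref{psub}, a regularized Newton step passing the test in Line \ref{pd}, or the secant fallback at Line \ref{secant}. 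Consequently two $\clubsuit$-iterations can never be consecutive, and since the initial condition $\mathtt{refresh}=1$ also excludes iteration $0$ from being $\clubsuit$, a pigeonhole count relative to the non-$\clubsuit$ iterations produces a bound of the form $|\mathcal{U}^\clubsuit_k|\leq c(|\mathcal{S}_k|+|\mathcal{U}^\rho_k|)$ that, when combined with the previous estimate in $k=|\mathcal{S}_k|+|\mathcal{U}^\rho_k|+|\mathcal{U}^\clubsuit_k|$ and with the coefficients collected, produces exactly the factor $3/2$ claimed in the statement. This last counting is the only genuinely novel part of the argument; everything else is a direct adaptation of the classical AR2 complexity proof in \cite[Chapter~3]{book_compl}.
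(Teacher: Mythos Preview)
Your approach is essentially identical to the paper's: the same three-way decomposition of the iterations (the paper writes $\mathcal{U}_{k,1}$ and $\mathcal{U}_{k,2}$ for your $\mathcal{U}^\rho_k$ and $\mathcal{U}^\clubsuit_k$), the same telescoping $\sigma$-dynamics argument citing \cite[Lemma~2.4.1]{book_compl} to bound $|\mathcal{U}^\rho_k|$, and the same bookkeeping on the \texttt{refresh} flag to control $|\mathcal{U}^\clubsuit_k|$.

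There is one quantitative gap you should be aware of. Your stated pigeonhole fact---``two $\clubsuit$-iterations can never be consecutive, and iteration $0$ is not $\clubsuit$''---only yields $|\mathcal{U}^\clubsuit_k|\le |\mathcal{S}_k|+|\mathcal{U}^\rho_k|$, i.e.\ $c=1$, because the worst-case pattern $N,C,N,C,\dots$ is compatible with the flag dynamics (after each non-$\clubsuit$ iteration Step~5 resets \texttt{refresh} to~$0$, so the very next iteration may again be $\clubsuit$). With $c=1$ you obtain $k\le 2\bigl(|\mathcal{S}_k|+|\mathcal{U}^\rho_k|\bigr)$, hence a leading factor~$2$ rather than the $3/2$ in the statement. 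The paper's own one-line justification (``at most one iteration in $\mathcal{U}_{k,2}$ every two iterations in $\mathcal{U}_{k,1}\cup\mathcal{S}_k$'') asserts $c=1/2$ without further argument and suffers from the same issue; the constant does not affect the order of the complexity bound, but you should not claim that non-consecutivity alone ``produces exactly the factor $3/2$''.
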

\begin{proof} 
Let ${\cal U}_{k,1}$ and ${\cal U}_{k,2}$
be the sets of indexes of the  iterations declared unsuccessful in Step 4 and in Line \ref{rejpd} 
up to iteration $k$ generated by the \frozen\ algorithm, respectively.
Observe that in case $j\in {\cal U}_{k,2}$, $\sigma_{j+1}=\sigma_j$. Then, by
proceeding as in  \cite[Lemma 2.4.1]{book_compl} we easily get 
that 
$$
|{\cal U}_{k,1}|+|{\cal S}_k|\le |{\cal S}_k|\left(1+\frac{|\log \gamma_1|}{\log \gamma_2}\right)+\frac{1}{\log \gamma_2}\log \left (\frac{\sigma_{\max}}{\sigma_0}\right).
$$
Since {\tt refresh} is set to $1$ at any iteration in  ${\cal U}_{k,2}$ and we can have an iteration in ${\cal U}_{k,2}$ only with {\tt refresh=0}, it follows that 
we can have at most  one iteration in ${\cal U}_{k,2}$ every two iterations in ${\cal U}_{k,1}\cup{\cal S}_k$. 
Then, the result
follows.
\end{proof}
\revdone{We can now resort to the standard “telescoping sum” argument to 
prove the following iteration complexity result 
showing that \frozen\ preserves the optimal complexity of \arc. }

\begin{theorem} \label{theorem_complexity}
Suppose that {\rm\textbf{AS.1}} and {\rm\textbf{AS.2}} hold. Given $\epsilon \in (0,1)$, there exists a constant $\kappa_p$ such that the \frozen\ algorithm 
needs at most
$$
\left \lceil \frac{ f(x_0)- f_{\rm low}}{\kappa_p}
 \epsilon^{-\frac{3}{2}}+1
\right \rceil,
$$
successful iterations 
and at most
$$
 \left( \left \lceil
 \frac{ f(x_0)- f_{\rm low}}{\kappa_p}
\ \epsilon^{-\frac{3}{2}} +1
\right \rceil
                 \left(3/2+\frac{3/2|\log\gamma_1|}{\log\gamma_2}\right)+
\frac{3/2}{\log\gamma_2}\log\left(\frac{\sigma_{\max}}{\sigma_0}\right)\right), 
$$
iterations to compute an iterate $x_k$ such that $\|g_k\|\le \epsilon$.
\end{theorem}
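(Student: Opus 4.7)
The plan is to follow the standard ``telescoping sum'' argument using the decrease estimate established in Lemma \ref{lemma_decrease}. First I would define the constant
\begin{equation*}
\kappa_p \eqdef \eta_1 \sigma_{\min} \min\!\left\{\tfrac{1}{2} C_{\rm low}^2 \kappa_0,\; \tfrac{1}{3} \kappa_1\right\},
\end{equation*}
with $\kappa_0$ and $\kappa_1$ as in Lemma \ref{lemma_decrease}, so that at every successful iteration $j$ one has $f(x_j) - f(x_{j+1}) \ge \kappa_p \|g_{j+1}\|^{3/2}$.

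Next, let $k$ be the first iteration at which $\|g_k\|\le\epsilon$ and suppose for contradiction that termination has not yet occurred before $k$, so that $\|g_{j+1}\|>\epsilon$ for every successful iteration index $j<k-1$. Summing the per-iteration decrease over $\mathcal{S}_{k-1}$, and noting that at unsuccessful iterations the iterate and hence $f$ does not change, I would write the telescoping bound
\begin{equation*}
f(x_0)-f_{\rm low} \;\ge\; f(x_0)-f(x_k) \;=\; \sum_{j\in\mathcal{S}_{k-1}}\!\!\bigl(f(x_j)-f(x_{j+1})\bigr) \;\ge\; |\mathcal{S}_{k-1}|\,\kappa_p\,\epsilon^{3/2},
\end{equation*}
where I used Assumption \textbf{AS.2} on the left. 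Solving for $|\mathcal{S}_{k-1}|$ and adding one (to account for the terminating iteration itself) gives the first bound on the number of successful iterations.

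Finally, I would invoke Lemma \ref{bound_it_S} together with the upper bound $\sigma_k\le \sigma_{\max}$ from \eqref{eq:bsigma} to convert this count of successful iterations into a bound on the total number of iterations, obtaining exactly the second expression in the statement. The whole argument is essentially mechanical once Lemma \ref{lemma_decrease} is available; the only subtlety worth highlighting is that the two regimes $k\in\mathcal{I}_{RN}$ and $k\notin\mathcal{I}_{RN}$ are already amalgamated into a single decrease inequality through the minimum defining $\kappa_p$, so no separate case analysis is required at this stage. The main obstacle, conceptually, was already handled upstream in proving that regularized Newton steps in $\mathcal{I}_{RN}$ enjoy a decrease proportional to $\|s_k\|^3$ (via $\widehat\lambda_k=\sigma_k\|\widehat s_k\|$ and the bound \eqref{bound_p}); here the remaining work is simply bookkeeping.
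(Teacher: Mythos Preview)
Your proposal is correct and matches the paper's proof essentially line for line: the same constant $\kappa_p$, the same telescoping sum over successful iterations using Lemma~\ref{lemma_decrease} and \textbf{AS.2}, and the same invocation of Lemma~\ref{bound_it_S} with~\eqref{eq:bsigma} for the total iteration count. The only cosmetic difference is that the paper indexes via $\bar k$ with $\bar k+1$ being the first iteration where $\|g_{\bar k+1}\|\le\epsilon$, whereas you index via $k$ directly; be slightly careful that your sum over $\mathcal{S}_{k-1}$ may include the index $j=k-1$ for which $\|g_{j+1}\|\le\epsilon$, but this is absorbed by the ``$+1$'' exactly as in the paper.
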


\begin{proof}At each successful iteration $k$  the algorithm guarantees
the decrease in ~\eqref{decrease}. 
Then,
\begin{equation}\label{decr-eps}
f(x_k)-f(x_{k+1})\ge
 \eta_1\sigma_{\min}\min \left \{\frac{1}{2} C_{\rm low}^2 \kappa_0,   \frac{1}{3} \kappa_1 \right \} \epsilon^{3/2},
\end{equation}
whenever  $\|g_{k+1}\|> \epsilon$.
Thus,  letting $\bar k$ such that $\bar k+1$ is the  first iteration where  $\|g_{\bar k+1}\|\le \epsilon$ and 
summing up from $k=0$ to $\bar k-1$, since $f(x_k)=f(x_{k+1})$ along unsuccessful iterations, and using  the telescopic sum   property, we obtain 
$$
 f(x_0)-f(x_{\bar k})
\geq \kappa_p \epsilon^{\frac{3}{2}}|{\cal S}_{\bar {k}-1}|,
\,
$$
where $\kappa_p=\eta_1 \sigma_{\min} \min \left \{\frac{1}{2}C_{\rm low}^2 \kappa_0,   \frac{1}{3} \kappa_1 \right \}$.

Since $f$ is bounded from below by $f_{\rm low}$, we conclude that
\begin{equation}\label{Sk1}
|{\cal S}_{\bar k} |
\leq \frac{f(x_0) - f_{\rm low}}{\kappa_p} \epsilon^{-\frac{3}{2}}+1.
\end{equation}
Lemma \ref{bound_it_S} then  yields the upper
bound on the total number of iterations.
\end{proof}

\begin{remark}
Given $\epsilon, \epsilon_H>0$, Algorithm~\ref{fAR2_refresh_algo}  can be suitably modified in order to compute an $(\epsilon,\epsilon_H)$-approximate second-order minimizer (see \eqref{fomin2}). We will refer to this variant of \frozen\  as Frozen AR2-Second Order (\frozenso) and the corresponding algorithms are detailed in Appendix~\ref{appendixA} (Algorithm~\ref{fAR2_refresh_algo_so} and Algorithm~\ref{projected_secular_algo2}),
together with a complexity analysis.
\end{remark}

\section{Choice of the approximation space}\label{sec:appr}
The performance of Algorithm~\ref{fAR2_refresh_algo} strongly depends on the chosen approximation space $\mathcal{K}_{d_k}=\text{Range}(V_k)$ 
used
for projecting the problem.
The construction of the approximation 
space and the definition of the related projected quantities are handled by Algorithm~\ref{projected_secular_algo}. The latter is given in full generality, regardless of the nature of $\text{Range}(V_k)$. However, some of its steps simplify according to the adopted subspace, as illustrated in the following.
Moreover, the performance of Algorithm~\ref{fAR2_refresh_algo} largely benefits from the inclusion of the current $g_k$ in the approximation space. 
In particular, the exact representation of $g_k$ 
 in the space is  crucial for obtaining the bounds on $\|s_k\|$ reported in~\eqref{bound_p}. 

In our framework, we define $V_k\in\mathbb{R}^{n\times d_k}$ as an orthonormal basis of the polynomial Krylov subspace generated  at iteration $d_k$ of \revdone{Algorithm~\ref{projected_secular_algo}}, by the current $H_k$ and $g_k$, namely
$$\mathcal K_{d_k}(H_k,g_k)=\text{span}\{g_k,H_kg_k\ldots,(H_k)^{d_k-1}g_k\}=\text{Range}(V_k).$$
By taking advantage of the symmetry of $H_k$, the basis $V_k$ can be constructed by the Lanczos method~\cite{Lanczos1952} which requires a single matrix-vector multiplication with $H_k$ per iteration along with a short-term orthonormalization procedure. Even though the entire basis is not required in this latter step, our \emph{freezing} strategy needs to store it to be able to keep using $V_k$ in later nonlinear iterations. This is in contrast with other contributions like, e.g.,~\cite{gltr}, where the basis is not stored in the first place but needs to be reconstructed by running a second Lanczos pass to retrieve the final solution.

We would like to stress that in case of polynomial Krylov subspaces, Line~\ref{line_orth_W} in Algorithm~\ref{projected_secular_algo} is redundant. Indeed, whenever a refresh is performed, $\text{span}(g_k)\subseteq \text{Range}(V_k)$ and $g_k$ is orthogonal to the following basis vectors by construction.

We also experimented with different approximation spaces. In particular, in section~\ref{rationalVSpolynomial} we report the results obtained by adopting the \emph{rational} Krylov subspace.
While the latter  oftentimes attains better numerical performance (see Tables~\ref{tab:risuopm}-\ref{tab:risuopm2}), rational Krylov subspaces turn out to be competitive on certain, particularly challenging, problems.

\section{Computational experiments}\label{Numerical experiments}
In this section we illustrate the  performance of our new \emph{freezing} paradigm for the computation of an approximate first-order optimality point.
We recall that  we are mainly interested in solving  optimization problems for which the solution of the regularized Newton linear system by direct methods is both feasible and efficient.  In our implementation of \frozen\ we employ direct methods for the solution of the regularized Newton system and compare \frozen\ against the state-of-the-art  implementation of \arc\ based on the secular equations and direct solvers. 

%tolto vecchio blu
\REV{In addition, in order to give more insight on the behaviour of \frozen\ against well-established procedure for unconstrained optimization problems,  we also  compare the results attained by \frozenp\ 
with the ones achieved by \galg, a state-of-the-art  implementation of \arc\ based on the Lanczos iteration \cite{ACO1}.}

The \frozen\ framework is designed for general approximation spaces; in our comparisons we adopt the polynomial Krylov subspace (\frozenp), while in section~\ref{rationalVSpolynomial} we also report some results obtained with the rational Krylov subspace as approximation space (\frozenr).

\subsection{Implementation details}
We implemented \frozen, whose details are given in Algorithms \ref{fAR2_refresh_algo}-\ref{projected_secular_algo},
in Matlab\footnote{MATLAB R2019b on a Intel Core i7-9700K CPU @ 3.60GHz x 8, 16 GB RAM, 64-bit.}~\cite{MATLAB} and we used the following standard values for the parameters
\begin{equation} \label{eq:param}
    \eta_1 = 0.1,\ \eta_2 = 0.8,\ \gamma_1=0.1,\ \gamma_2 = 2,\ \theta_1 = 0.1,\ \sigma_{\min} = 10^{-8},
\end{equation}
along with
$$\ C_{\rm low} = 10^{-20},\ C_{\rm up} = 10^{20}.
$$
\REV{In Step 5 of Algorithm  \ref{fAR2_refresh_algo}, in case $\rho_k\ge \eta_2$ we set $\sigma_{k+1}=\gamma_1 \sigma_k$ (cf. \req{sig2bis}).}
The maximum subspace dimension is set to $j_{\max} = 50$. 
Moreover, the solution of the secular equation projected onto $\text{Range}(W_k)$ (Lines \ref{froot} and \ref{froot2} of Algorithm \ref{projected_secular_algo}) is performed by using the Matlab function  {\tt fzero} providing $$[\min(-\lambda_{\min}(H^{(j)},0), \min(-\lambda_{\min}(H^{(j)}),0) +1000],$$  as starting interval for the bisection method. 
All the (shifted) linear systems involved in our procedure are solved by the Matlab backslash operator.

In the rare scenario where our \revdone{predictor-corrector}  scheme fails, namely Line~\ref{secant} of Algorithm \ref{fAR2_refresh_algo} is performed, 
 the step $s_k$ is computed by employing the state-of-the-art solver {\sc RQS} included in the 
{\sc GALAHAD} optimization library (version 2.3)\footnote{Available from http://galahad.rl.ac.uk/galahad-www/.} based, once again, on direct solvers for linear systems.
We used the Matlab interface and default parameters except for the values  ${\tt control.stop\_normal}\ = \theta/\sigma_k$ in order to fulfil the condition (\ref{eq:dec2}).

We compare the performance of our new method against a classic 
implementation of Algorithm \ref{AR2_algo} that makes use of the parameter values in (\ref{eq:param}) 
and uses {\sc RQS} in Step 2 equipped with all the default parameters except for the value  ${\tt control.stop\_normal}\ = \theta$. We refer to the corresponding overall implementation as \galr\footnote{We have also performed experiments employing the parameter value ${\tt control.stop\_normal}\ = \theta_1/\sigma_k$, but  
the corresponding implementation of \galr\ was much less robust than using ${\tt control.stop\_normal}\ = \theta_1$. Therefore, we chose to report the results obtained with the latter value only.}.
\REV{Finally, we considered \galg\ that in Step 2 uses {\sc GLRT}  from {\sc GALAHAD}  based on the Lanczos iteration.
Again we used default parameters except for the value  ${\tt control.stop\_relative}\ = \theta_1$. }

All algorithms terminate when one of the following conditions holds: 
\begin{equation}\label{stopg}
\text{(i)} \qquad \|g_k\| \le \epsilon_r \|g_0\|,
\end{equation}
where $\epsilon_r>0$ depends on the problem test set\footnote{The stopping criterion (\ref{stopg})
corresponds to setting $\epsilon= \epsilon_r \|g_0\|$ in Algorithms
\ref{AR2_algo} and \ref{fAR2_refresh_algo}.}; (ii) 5000 iterations are performed; (iii) the computational time limit of 2 hours is reached.

\subsection{Test problems}
We considered the test problems in the {\sc OPM} testing set~\cite{OPM}, which contains a small collection of {\sc CUTE}st unconstrained and bound-constrained nonlinear
optimization problems. From all the problems listed in~\cite{OPM} we selected the unconstrained ones. For those problems allowing for a variable dimension $n$, we specify the adopted value of $n$ in Table \ref{tab:OPM} in Appendix \ref{sec:opm}.

As testing problems, we also considered binary classification problems. We suppose we have at our disposal a training set of pairs $\{(a_i, b_i)\}$ with $a_i \in \IR^n, b_i \in \{-1, 1\}$ (or $b_i \in \{0, 1\}$)  and $i = 1, ... ,N$, where $b_i$ denotes the correct sample classification. We perform both a logistic regression and a 
sigmoidal regression, yielding convex and non convex problems, respectively.

In the first case, we consider as
training objective function the logistic loss with $\ell_2$ regularization \cite{BKK_IMA,Bollapragada2019}, i.e.
$$
f(x) = \frac{1}{N}\sum_{i = 1}^N \log \left(1+e^{-b_i a_i^T x}\right) + \frac{1}{2N} \|x\|_2^2,
$$
with $b_i \in \{-1, 1\}$. In the second case the following sigmoid function and least-squares loss is employed
\cite{SIRTR, Roosta_Informs}
$$
f(x) = \frac{1}{N}\sum_{i = 1}^N \left (b_i -\frac{1}{1+e^{- a_i^T x}}\right)^2,
$$
with  $b_i \in \{0, 1\}$. 
We used the six data sets\footnote{A9A \cite{UCI}, CINA0 \cite{UCI}, GISETTE \cite{UCI}, MNIST \cite{mnist}, MUSH \cite{UCI}, REGERD0 \cite{libsvm}} reported in Table \ref{tab:bin} together with the dimension $n$ of the parameter vector $x\in \IR^n$. 

For OPM and binary classification problems, we ran the algorithms until (\ref{stopg}) holds  with $\epsilon_r = 10^{-6}$ and $\epsilon_r = 10^{-3}$, respectively.

\subsection{Numerical results}
We  compare the performance of  \frozenp\  with
the state-of-the-art factorization-based \arc\ method, labeled as \galr,
\REV{and with the \arc\ state-of-the-art implementation adopting the Lanczos iterative scheme for the subproblems, labelled as \galg. 
To provide sensible insight on the general numerical behaviour of the
proposed \frozen\ framework, in the comparison between \frozenp\ and \galr\ we focus} on the number of $n$-dimensional factorizations and nonlinear iterations performed by the two algorithms.

\subsubsection{Results for the OPM test set}\label{expe:OPM}

The performance profiles related to 
\REV{\frozenp\ and \galr, applied to}
the OPM problems\footnote{All the tested algorithms failed in solving the very ill-conditioned problems ARGLINB, ARGLINC, COSINE, GENHUMPS, PENALTY2, and SCOSINE. Therefore, these problems have been excluded from the testing set. Moreover, they all did not converge in 2 hours when trying to solve CHEBYQUAD.} listed in Table~\ref{tab:OPM} are reported in Figure  \ref{figure:fig2}. 
 We remind the reader that
a performance profile graph $p_A(\tau)$ of an algorithm $A$ at point $\tau$ shows the fraction of the test set
for which the algorithm is able to solve within a factor of $\tau$ of the best algorithm for the given measure \cite{dolanmore}. 
 These performance profiles take into account the number of $n$-dimensional factorizations and nonlinear iterations as performance measures.

%%%%%%%%%%%%%%%%%%%%%%%%%%%%%%%%%%%%%%%%%%%%%%%%%%%%%%%%%%%%%%%%%%%%%%%%%

\begin{figure}[t!b] \centering
		\begin{tikzpicture}
		\begin{axis}[width=.8 * \linewidth, 
		title = {Number of factorization performance profile},
		xlabel = {$\tau$}, 
		ylabel = {$p_A(\tau)$},
		height = .25 \textheight,
		legend pos = south east,
		legend cell align={left},
		xmax = 40,
		xmin = 1,
		ymax = 1.1,
		ymin = 0, ymax = 1.1
		]
\addplot+[thick, mark=none, blue] table {./pp_OPM_fact_RQS_rev_2curve.dat};		
\addplot+[ultra thick, mark=none,  dotted] table {./pp_OPM_fact_frozenPK_rev_2curve.dat};
\legend{\galr, \frozenp};
\end{axis}
\end{tikzpicture}\\
		\begin{tikzpicture}
		\begin{axis}[width=.8 * \linewidth, 
		title = {Number of nonlinear iteration performance profile},
		xlabel = {$\tau$}, 
		ylabel = {$p_A(\tau)$},
       height = .25 \textheight,
		legend pos = south east,
		legend cell align={left},
		xmax = 16,
		xmin = 1,
		ymax = 1.1,
		ymin = 0, ymax = 1.1
		]
\addplot+[thick, mark=none, blue] table {./pp_OPM_NLI_RQS_rev_2curve.dat};		
\addplot+[ultra thick, mark=none,  dotted] table {./pp_OPM_NLI_frozenPK_rev_2curve.dat};
\legend{\galr,\frozenp};
\end{axis}
\end{tikzpicture}		
  \caption{Section~\ref{expe:OPM}. Performance profiles of \galr\ and \frozenp\ for the OPM set. Top: Number of factorizations. Bottom: Number of nonlinear iterations.\label{figure:fig2}}
\end{figure}
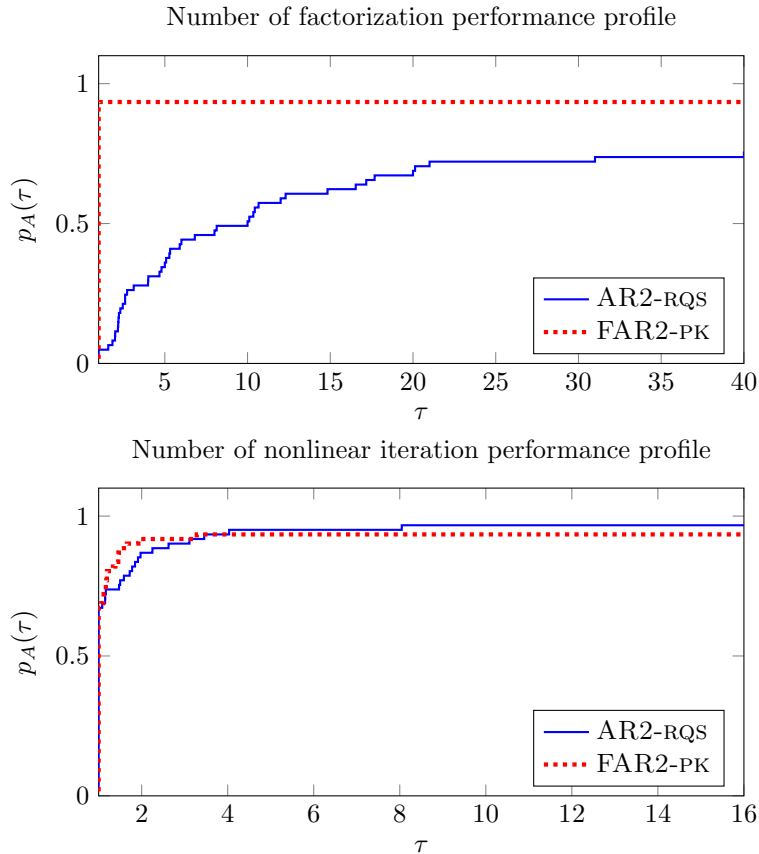

The profiles in Figure~\ref{figure:fig2} clearly show an outstanding advantage in using  our \frozenp\ approach in terms of total number of matrix factorizations with respect to \galr.  \revdone{Indeed, \frozenp\ is the most efficient routine in 94\%  of the runs
and \galr\ is within a factor 2 of \frozenp\  in only 11\%  of the runs.}
The behavior of the two routines in terms of number of nonlinear iterations is rather similar, while \galr\ is slightly more robust solving one problem more than \frozenp \footnote{The failure occurred because the maximum number of iterations was reached.}.

The first two sets of columns in Tables \ref{tab:risuopm} and \ref{tab:risuopm2} in Appendix~\ref{sec:opm} collect the detailed results used to generate the plots  in Figure~\ref{figure:fig2}. In particular, we report the number of nonlinear iterations ($\#$NLI) and the number of $n$-dimensional factorizations (\#fact) for the two solvers and for each problem. In addition, for the \frozen\ method the tables include also the number of nonlinear iterations where the space is refreshed (\#ref), the average dimension of the projected problems ($ave_K$), the number of times
the step $s_k$ is computed in Line \ref{psub} of 
Algorithm \ref{fAR2_refresh_algo} (\#sub) using a frozen subspace, and the number of times the solver resorts to the secant method in Line \ref{secant} of 
Algorithm \ref{fAR2_refresh_algo} (\#sec). These results provide a good understanding of the general behaviour of our algorithm. First, we observe that the basis refresh is invoked rarely, specifically in the 1.8\% of the total number of nonlinear iterations. Second, in the 3.5\% of the nonlinear iterations, the step 
$s_k$ is computed in Line \ref{psub}, that is condition (\ref{eq:dec2}) is satisfied avoiding the solution of the linear system in Line~\ref{pnew} thus 
saving a lot of computational efforts.  Third, the results clearly show that the secant method in Line~\ref{secant} is a mere safeguard step as it is
called 0.8\% of times by \frozenp.

Finally, we have compared \frozenp\ with \galg.  Considering the 67 OPM problems, for most runs  the performance of \frozenp\ and \galg\ is comparable.
 Nevertheless, we report in Table \ref{tab:gal} statistics of the subset of  problems where the dimension of the underlying polynomial Krylov subspace constructed within \galg\ is indeed the full dimension $n$ (or very close). 
 The table reports the number of nonlinear iterations (\#NLI), the maximum dimension of the generated Krylov subspace $max_K$ for \galg,  and the overall CPU time in seconds ($cpu*$)\footnote{We remark that we report the cpu time for illustrative purposes only, as both \galg\  and \frozenp\ use compiled functions.}. 

\REV{Even though constructing a basis for the full space is not an issue for \galg\ in terms of storage demand thanks to the employment of the Lanczos method for the basis construction, this does remarkably increase the overall computational effort. Indeed, at each iteration of the Lanczos process within \galg\ we have to solve a ``projected'' problem that is tridiagonal, yet its dimension is close to $n$. Repeating this operation for a number of iterations causes the different performance in terms of running time of 
 \frozenp\ and \galg\ in spite of the often similar number of nonlinear iterations they perform.
 These results show that when \galg\ struggles in the subspace minimization and an efficient preconditioner is not available, \frozenp\ results in a 
 valid alternative for the solution of~\eqref{eq:pb}.}
 
 \begin{table}[htbp]
  \centering
  \footnotesize
    \begin{tabular}{lrrrr}
          \toprule
          & \multicolumn{2}{c}{\galg}             & \multicolumn{2}{c}{\frozenp} \\
          \midrule
    Problem  & \multicolumn{1}{l}{\#NLI($max_K$)} &  \multicolumn{1}{l}{$cpu*$} &   \multicolumn{1}{l}{\#NLI} & \multicolumn{1}{l}{$cpu*$} \\
    \midrule
    CURLY10 & 30 ($n$) & 3.0  & 20 & 1.7 \\
    CURLY20 & 27 ($n$) & 2.8  & 24 & 1.9 \\
    CURLY30 & 29 ($n$) & 3.3  & 29 & 2.1 \\
    DIXON & 9 ($n$)  &  0.7   &  6 & 0.4 \\
    EIGENBLS & 750 (862) & 788&  739  & 373 \\
    MSQRTALS & 172 ($n$) & 195 & 256 & 142 \\
    MSQRTBLS & 332 ($n$) & 470 & 468 & 294 \\
    WMSQRTALS & 191 ($n$) & 253 & 292 & 153 \\
    WMSQRTBLS & 302 ($n$) & 493 & 352  & 191 \\
    \bottomrule
        \end{tabular}%
      \caption{Numerical results for selected OPM problems:
       number of nonlinear iterations (\#NLI), maximum dimension of the generated Krylov subspace $max_K$ for \galg, and the overall cpu time in seconds ($cpu*$).}
  \label{tab:gal}%
\end{table}%

% \end{document}

\subsubsection{Results for the binary classification problems}\label{expe:classification}
 Table~\ref{tab:bin} collects the results \REV{obtained solving the binary classification problems by  \frozenp\ and \galr,}  distinguishing between the logistic and the sigmoidal loss function cases for the six datasets we tested. In spite of the larger number of nonlinear iterations ($\#$NLI)  performed by \frozenp\ in a few cases, 
we remark that the overall number of factorizations always remains much in favour of \frozenp\ when compared to \galr. In particular, \frozenp\ is extremely efficient in terms of \#fact while keeping the dimension of the polynomial Krylov subspace rather low. 

\begin{table}[t!]
  \centering
\footnotesize
    \begin{tabular}{l|rr|rr}
    \toprule
    \multicolumn{1}{c|}{logistic loss} &        \multicolumn{2}{c|}{\# fact($ave_K$)} & \multicolumn{2}{c}{\#NLI} \\
    \midrule
    \multicolumn{1}{c|}{data set($n$)}  & \multicolumn{1}{c|}{\galr} & \multicolumn{1}{c|}{\frozenp} &  \multicolumn{1}{c|}{\galr} & \multicolumn{1}{c|}{\frozenp}   \\
    \midrule
     A9A (123)   & 16    & 3(2)            & 7   & 8  \\
   CINA0 (132)   & 36    & 1(26)          & 7    &  6  \\
   GISETTE (5000)  & 24    & 1(10)         & 7   & 7  \\
    MNIST (784)   & 22    & 4(4)           & 8     &  8  \\
    MUSH  (112)   & 20    & 0(3)           & 9  & 19    \\
    REGED0 (999)   & 8    & 7(50)          & 8       & 7  \\
    \midrule

    \midrule
    \multicolumn{1}{c|}{sigmoidal loss} &        \multicolumn{2}{c|}{\# fact ($ave_K$)} & \multicolumn{2}{c}{\#NLI} \\
    \midrule
    \multicolumn{1}{c|}{data set($n$)} &  \multicolumn{1}{c|}{\galr} & \multicolumn{1}{c|}{\frozenp}   &  \multicolumn{1}{c|}{\galr} & \multicolumn{1}{c|}{\frozenp}   \\
    \midrule
 A9A (123)   & 19    & 3(2.5)          & 8      & 21   \\
  CINA0 (132)   & 49    & 1(26)           & 7     &  6  \\
  GISETTE (5000)  & 58    & 4(7.6)            & 9      & 37    \\
   MNIST (784)   & 116   & 30(3)         & 16     & 39   \\
    MUSH  (112)   & 27    & 0(3)         & 10     &  33  \\
    REGED0 (999)   & 14    & 6(50)        & 7      &6  \\
    \bottomrule
    \end{tabular}%
      \caption{Section~\ref{expe:classification}. Results on binary classification problems: number of factorization (\# \revdone{fact}) and number of nonlinear iterations (\#NLI) for the \galr\ and the \frozenp\ methods.}     
  \label{tab:bin}%
  \end{table}%

\begin{figure}[tb]
\centering
\begin{tikzpicture}
\begin{axis}[width=. 49* \textwidth, 
		   title = {CINA0  data set with logistic loss},		   
	xtick=data,	
	legend style={at={(0.5,-0.3)},anchor=north},
	ylabel={\# factorizations},
	xlabel={nonlinear iterations},
	ymax = 12,  ytick={1,2,...,12},
		   ymin = 0,
	x tick label style=
		{rotate=45,anchor=east}]
\addplot+[ybar,mark=o ] plot coordinates
        {(1,5) (2,6)  (3,7) (4, 6) (5,5) (6,4) (7,3)};
                  \addplot+[ybar,mark=star] plot coordinates
        {(1,0) (2,1)  (3,0) (4, 0) (5,0) (6,0)};
            \legend{\galr,  \frozenp };
\end{axis}
 \end{tikzpicture}
 \begin{tikzpicture}
\begin{axis}[width=. 49* \textwidth, 
		   title = {CINA0  data set with sigmoid loss},		   
	 legend style={at={(0.5,-0.3)},anchor=north},
		xtick={1,2,...,9},
    ytick={1,2,...,10},
			ymax = 10,
		   ymin = 0,
	ylabel={\# factorizations},
	xlabel={nonlinear iterations},
	x tick label style=
		{rotate=45,anchor=east}]
\addplot+[ybar,mark=o  ] plot coordinates
        {(1,5) (2,6)  (3,7) (4, 8) (5,8) (6,8) (7,7)};
        \addplot+[ybar,mark=star ] plot coordinates
        {(1,0) (2,1)  (3,0) (4, 0) (5,0) (6,0) };
   \legend{\galr,  \frozenp };   
\end{axis}
 \end{tikzpicture}
 \caption{Section~\ref{expe:classification}. Number of factorizations per nonlinear iteration for the CINA0 data set.\label{fig:cina0}}
  \end{figure}

\begin{figure}[tb]
\centering
\begin{tikzpicture}
\begin{axis}[width=. 49* \textwidth, 
		  title = {MNIST data set with logistic loss},		   
		xtick={1,2,...,10},
  ytick={0,1,...,5},
			ymax = 5,
		   ymin = 0,
  legend style={at={(0.5,-0.3)},anchor=north},
 	ylabel={\# factorizations},
	xlabel={nonlinear iterations},
	x tick label style=
		{rotate=45,anchor=east}]
\addplot+[ybar,mark=o  ] plot coordinates
        {(1,2) (2,2)  (3,2) (4, 3) (5,3) (6,4) (7,3) (8,3)};
                 \addplot+[ybar,mark=star  ] plot coordinates
        {(1,0) (2,0)  (3,0) (4, 0) (5,1) (6,1) (7,1) (8,1)};
        \legend{\galr,  \frozenp };
\end{axis}
 \end{tikzpicture}
 \begin{tikzpicture}
\begin{axis}[width=. 49* \textwidth, 
			  title = {MNIST data set with sigmoid loss},		   
		xtick={1,8,...,57},
      legend style={at={(0.5,-0.3)},anchor=north},
    	ylabel={\# factorizations},
	xlabel={nonlinear iterations},
		ymax = 22,
		   ymin = 0,
	x tick label style=
		{rotate=45,anchor=east}]
\addplot+[ybar,mark=o  ] plot coordinates
        {(1,1) (2,2)  (3,2) (4, 3) (5,3) (6,4) (7,8) (8,8) (9,21) (10,14) (11,15) (12,8) (13,7) (14,10) (15,5) (16,5)};
    \addplot+[ybar,mark=star ] plot coordinates
        {(1,0) (2,0)  (3,0) (4, 0) (5,0) (6,1) (7,1) (8,1) (9,0) (10,0) (11, 1) (12,0) (13,1) (14,1) (15,1) (16,1)(17,1)(18,1)(19,1)(20,0)(21,1)(22,1)(23,1)(24,1)(25,1)(26,1)(27,1)(28,1)(29,1)(30,1)(31,1)(32,1)(33,1)(34,1)(35,1)(36,1)(37,1)(38,1)(39,1)};
      \legend{\galr,  \frozenp };
\end{axis}
 \end{tikzpicture}
 \caption{Section~\ref{expe:classification}. Number of factorizations per nonlinear iteration for the MNIST data set.}\label{fig:mnist}
 \end{figure}

Figures~\ref{fig:cina0}-\ref{fig:mnist} provide an easier visualization of the behaviour of the two routines we tested in terms of number of factorizations for two reference problems: CINA0 and MNIST. In all cases, we observe that \frozenp\ perform very few factorizations and the subspace-freezing strategy typically leads to a single factorization per nonlinear iteration or even no factorizations at all if the approximate solution $s_k=W_k\widehat s_k$ to the secular equation satisfies \req{eq:dec2} in Line \ref{psub} of Algorithm \ref{fAR2_refresh_algo}  (see Figures~\ref{fig:cina0}-\ref{fig:mnist}). This happens for several nonlinear iterations.

The sigmoidal case of MNIST in Figure \ref{fig:mnist} shows an unusual trend of \frozenp where many more nonlinear iterations are performed compared to what happens for \galr\ and the regularized Newton step is used frequently. This is probably the worst scenario for \frozen. Indeed, computing the regularized Newton step in many nonlinear iterations can deteriorate the gains coming from our freezing strategy as the number of linear systems to be solved may dramatically increase. Nevertheless, \frozen\ is still able to perform a number of factorizations that is much lower than the one needed by \galr; see Table~\ref{tab:bin}.

Finally, we observe that for strictly convex problems, as those obtained by using the logistic loss in classification problems, the refresh strategy is never activated as, in fact, $H_k+\lambda I$ is positive definite for any $\lambda \ge 0$
and condition (\ref{bound_p}) is always met in practice.

\subsubsection{On the use of a different approximation space: a numerical illustration}\label{rationalVSpolynomial}
As already mentioned, our new \frozen\, framework is rather flexible and, in principle, any approximation space can be adopted in our solution procedure. In this last experiment we explore the use of rational Krylov subspaces as approximation space.
To the best of our knowledge, rational Krylov subspaces have never been proposed in the context of nonlinear optimization prior this work. We thus report additional details about its construction and properties.

Given a suitable set of shifts $z_{d_{k}}=(\xi_1,\ldots,\xi_{d_{k}})\in\mathbb{R}^{d_{k}}$, the  rational Krylov subspace of dimension $d_k$ generated by the symmetric matrix $H_k$ and the vector $g_k$ is defined as follows\footnote{For convenience, we do not include $g_k$ in the space definition, following the typical setting in model order reduction.
Nonetheless, the vector $g_k$ is included in the projection phase, see Algorithm~\ref{projected_secular_algo}.}
\begin{equation}\label{def:ratkrylov}
{\cal K}_{d_{k}}(H_k,g_k,z_{d_{k}})=\text{span}\left\{%g_k,
(H_k+\xi_1 I)^{-1}g_k, \ldots,
\prod_{j=1}^{d_{k}}(H_k+\xi_j I)^{-1}g_k\right\}.  
\end{equation}
An orthonormal basis $V_k\in\mathbb{R}^{n\times d_{k}}$ of ${\cal K}_{d_{k}}$ can be constructed by means of the rational Arnoldi method~\cite{Ruhe94} or the rational Lanczos scheme~\cite{rationalLanczos}. Starting with $v_1=(H_k+\xi_1 I)^{-1}g_k$, normalized to have unit norm, the next basis vector is obtained by first computing
$\widetilde v_2=(H_k+\xi_2 I)^{-1}v_{1}$,
 orthogonalizing $\widetilde v_2$ with respect to the previous vectors, in this case only $v_1$, and finally normalizing to have unit norm. The procedure thus continues recursively.

One of the major computational costs per iteration of the rational Krylov subspace basis construction amounts to the solution of a shifted linear system with $H_k$, especially in case $H_k$ is not very sparse. Therefore, to limit computational costs the number of performed iterations $d_k\leq j_{\max}$ should remain small. 
To this end, the choice of the shifts is crucial. 
Quasi-optimal and adaptive procedures  have been proposed, where the shifts are computed automatically; see, e.g.,~\cite{Guettel2013,DruSim11_RKSM,Druskin2010} and the references therein. In case $H_k$ is semi-definite, as it is expected to occur for large enough $k$,  the nonzero shifts are chosen to have the same signature of $H_k$ so that $H_k + \xi_j I$ results in a positive definite matrix. On the other hand, if $H_k$ is slightly indefinite as it may happen at the beginning of the outer nonlinear procedure, the shifts can be selected to ensure that $H_k + \xi_j I$ is still nonsingular. 
However, we would like to remark that we never needed to implement
special strategies for the singular cases during our extensive experimental testing. In particular, although some instances of $H_k+\xi_j I$ turned out to be ill-conditioned, this aspect did not seem to severely affect the quality of the computed subspace.
The adaptive shift selection procedure described in~\cite{DruSim11_RKSM} has been adopted to obtain all the numerical results reported in this section.

Rational Krylov spaces are generally able to capture good spectral information on a given problem already for small dimensions, compared to classic polynomial Krylov subspaces, thus significantly reducing the original problem size. 
Moreover, we expect the rational Krylov subspace generated by $H_k$ and $g_k$ to still be a successful approximation space for successive nonlinear iterations, especially if the spectral properties of $H_k$ slowly change with $k$. On the other hand, the rational Krylov basis construction is rather expensive as it requires the solution of linear systems.

In Tables~\ref{tab:risuopm}-\ref{tab:risuopm2} we report the results obtained by solving all the OPM problems by \frozen\ adopting either the rational Krylov (RK) or the polynomial Krylov (PK) subspace as approximation space.
\revdone{
As it can be seen from these results, employing RK very often leads to an overall number of factorizations that is smaller than the one performed by adopting \galr\ but, almost always larger  than the one required employing PK}. We can thus confidently say that PK is often the right choice as approximation space in this regard. However, in case the overall number of outer iterations performed by adopting PK is significantly higher than employing RK,
solely looking at the  number of factorizations may give only a partial picture of the actual numerical performance of the whole procedure. This is especially the case for sparse problems where the linear system solution cost is proportional to the number of nonzeros of the coefficient matrix~\cite{ScottTuma2023}.
%%%%%%%%%%%%%%%%%%%%%%%%%%%%%%%%%%%%%%%%%%%%%%%%%%%%%%%%%%%%%%%%%%%%%%%%%%%%%%%%%

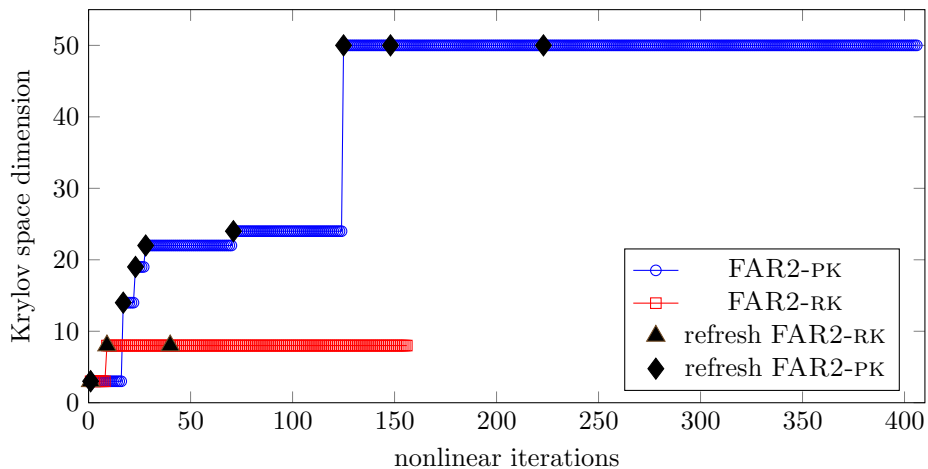
\begin{figure}[tb]
\centering
\begin{tikzpicture}
\begin{axis}[width= \textwidth, height=0.3\textheight,
		 legend pos=north west,
	ylabel={Krylov space dimension},
	xlabel={nonlinear iterations},
		   ymin = 0,
     xmin =0,
     xmax = 410,
  	ytick={0,10,...,60},	
	 legend pos= south east,]
	\foreach \j in {1} {
    \addplot[mark=o,mark options={fill=white}, blue] table[x index = 0, y index = \j] {./eigenbls_dimK_poly.dat};
		}
  \foreach \j in {1} {
		  \addplot+ [red,mark=square] table[x index = 0, y index = \j] {./eigenbls_dimK_rat.dat};
\addplot+[only marks, mark size=4, mark=triangle*,mark options={fill=black} ] plot coordinates
        {(1,3) (9,8)  (40,8)};
        \addplot+[only marks, mark size=4,mark=diamond*,mark options={fill=black} ] plot coordinates
        {(1,3) (17,14)  (23,19) (28,22) (71,24) (125,50) (148,50) (223,50) };
		}
        \legend{\frozenp, \frozenr,refresh \frozenr, refresh \frozenp };
\end{axis}
 \end{tikzpicture}
 \caption{Section~\ref{rationalVSpolynomial}. Dimension of the Krlylov subspace versus the nonlinear iterations for the EIGENBLS problem in OPM and subspace refresh: \frozenr\ and \frozenp.}
  \label{fig:eig2}
 \end{figure}

As a representative example, we report in Figure \ref{fig:eig2} the results obtained for the EIGENBLS problem from OPM with $\epsilon_r = 10^{-4}$ in (\ref{stopg}) for the sake of better readibility\footnote{We remind the reader that the results in Tables~\ref{tab:risuopm}-\ref{tab:risuopm2} are obtained by setting $\epsilon_r=10^{-6}$. This explains the different values reported in Figure~\ref{fig:eig2}.}:  for each nonlinear iteration the dimension of the generated Krylov subspace is reported.  We also highlight those iterations where a refresh of the Krylov subspace takes place. We can observe that the dimension of the PK subspace reaches the maximum dimension allowed ($j_{\max}=50$)
for large $k$. This is probably due to the large conditioning of the Hessian matrix which becomes of the order of $10^7$ for $k>100$.
Working with a larger subspace implies an increase in the cost of computing the projected step $\widehat s_k$ in Algorithm~\ref{projected_secular_algo}.
On the contrary, the dimension of the RK subspace stays always below 8
making the cost of computing $\widehat s_k$ negligible. Another evident drawback of employing PK in this example is the much larger number of nonlinear iterations we need to perform if compared to the case where RK is adopted. This leads to an increase in the cost of basically every step of our scheme: The projection phase (e.g., the explicit construction of $H^{(j)}=(W^{(j)})^TH_kW^{(j)}$), the regularized Newton step computation, and, more remarkably, the Hessian matrix $H_k$ and the gradient $g_k$ evaluation; \frozenp\ carries out all these steps hundreds of times more than \frozenr\, in this example. In particular, for both \frozenp\ and \frozenr, constructing $H_k$ and $g_k$ amounts to approximately the 85\% of the overall computational time. 
As a consequence, \frozenr\ turns out to be faster than \frozenp\ in spite of the more expensive basis construction of the former. In particular, in this example, the overall running time of \frozenr\ and \frozenp\ is
86.2 and  233.9 seconds, respectively\footnote{For illustrative purposes only, we report that \galr\  takes 517.8 seconds for the EIGENBLS problem.}.

\section{Discussion and conclusions}\label{Conclusions}
For the practical use of the adaptive regularization method, the efficient minimization of the cubic regularized model
 is a crucial issue. We have addressed this bottleneck by proposing a new two-step procedure, the \frozen\ method. The main difference between \frozen\ and standard adaptive-regularization techniques lies in the minimization of
the involved model: in  \frozen\ this minimization is carried out over a possibly {frozen} subspace.  Under standard  conditions,
 we have proved that the optimal worst-case complexity of the original \arc\ method is preserved, regardless of the adopted subspace. 

Our extended experimental testing shows that the freezing paradigm is able to considerably reduce the overall number of matrix factorizations compared to classic procedures based on the solution of the full space secular equation. In terms of approximation space, we have explored the use of both rational and polynomial Krylov subspaces. While they both lead to a rather successful solution process, the use of \frozen\ combined with polynomial Krylov subspaces turns out to be extremely efficient in most cases, whereas the implementation employing rational Krylov subspaces seems 
a valid alternative especially for sparse problems, where the additional cost coming from linear systems solves involved in the basis construction remains limited.

We finally remark that the proposed approach could be easily adapted to trust-region frameworks with second order models; see, e.g. {\sc TR2} in \cite[Section 3.2]{book_compl}. Our reduced approach could be appealing whenever the solution of the trust-region subproblem beyond the Steihaug–Toint point is sought; see, e.g., \cite{gltr,sigma}. Indeed, in this case, if the current step lies on the trust-region boundary,  the following secular equation 
$$
\phi_T(\lambda; g_k, H_k, \Delta_k) \eqdef \|(H_k + \lambda I)^{-1}g_k\| - \Delta_k =0,
$$
has to be (approximately) solved; here $\Delta_k>0$ is the so-called trust-region radius. Therefore,  Algorithm \ref{projected_secular_algo} can be straightforwardly modified by using $\phi_T$ in place of $\phi_R$ in Lines \ref{froot} and \ref{froot2}.

\section*{Declarations}
\subsection*{Ethics approval and consent to participate}
Not applicable.
\subsection*{Consent for publication}
Not applicable.
\subsection*{Funding}
{\footnotesize
The work of all the authors was partially supported by INdAM-GNCS under the INdAM-GNCS project CUP\_E53C22001930001. 
The research of S. B. was  partially granted by the Italian Ministry of University and Research (MUR) through the PRIN 2022 ``Numerical Optimization with Adaptive Accuracy and Applications to Machine Learning'',  code: 2022N3ZNAX MUR D.D. financing decree n. 973 of 30th June 2023 (CUP B53D23012670006). 
The research of S. B. and M. P. was partially granted by PNRR - Missione 4 Istruzione e Ricerca - Componente C2 Investimento 1.1, Fondo per il Programma Nazionale di Ricerca e Progetti di Rilevante Interesse Nazionale (PRIN) funded by the European Commission under the NextGeneration EU programme, project ``Advanced optimization METhods for automated central veIn Sign detection in multiple sclerosis from magneTic resonAnce imaging (AMETISTA)'',  code: P2022J9SNP,
MUR D.D. financing decree n. 1379 of 1st September 2023 (CUP E53D23017980001).
The work of D. P. and V. S.
was partially supported by the European Union - NextGenerationEU under the National Recovery and Resilience Plan (PNRR) - Mission 4 Education and research
- Component 2 From research to business - Investment 1.1 Notice Prin 2022 - DD N. 104 of 2/2/2022,
entitled “Low-rank Structures and Numerical Methods in Matrix and Tensor Computations and their
Application”, code 20227PCCKZ – CUP J53D23003620006. 
}
\subsection*{Availability of data and materials}
The data that support the findings of this study are available from the corresponding author upon request.
\subsection*{Competing interests}
The authors declare that they have no competing interests.
\subsection*{Authors' contributions}
All authors contributed equally to the writing of this article. All authors reviewed the manuscript.

%\subsection*{Acknowledgments}

% 
\appendix
\vskip 5pt
\noindent
\section{The Frozen AR2  algorithm for second order optimality points}\label{appendixA}

Given $\epsilon, \epsilon_H>0$, Algorithm~\ref{fAR2_refresh_algo}  can be suitably modified in order to compute an $(\epsilon,\epsilon_H)$-approximately second-order minimizer (see \eqref{fomin2}). We will refer to this variant of \frozen\  as \frozenso\ (Frozen AR2-Second Order). The corresponding algorithms are detailed in Algorithm~\ref{fAR2_refresh_algo_so} and Algorithm~\ref{projected_secular_algo2} and the introduced
 modifications are listed below.
 \vskip 0.1in
\begin{itemize}
\item Once an   $\epsilon$-approximate first order optimality point has been reached, in order to obtain 
 further progress towards an approximate second-order optimality point, the stronger condition   $H_k+\widehat \lambda_k I\succ 0$ is needed to employ the  regularized Newton step, rather than \eqref{posedf_dir}.  
For the sake of simplicity, in Algorithm~\ref{fAR2_refresh_algo_so}
we did not distinguish  between iterations where 
$\|g_k\|>\epsilon$ or $\|g_k\|\le \epsilon$ and, 
at any iteration, in Line \ref{pd} we replace 
the condition $s_k^T(H_k+\widehat \lambda_k I) s_k \le 0 $   by   
$H_k+\widehat \lambda_k I\not \succ 0$.
\item Algorithm~\ref{fAR2_refresh_algo_so}  needs to be equipped with a second-order  stopping criterion;  in Step 1 the algorithm is stopped whenever 
the following two conditions hold:
\begin{equation}
\label{tc.2}
\| g_k\| \le \epsilon \quad \textrm{and}\quad \lambda_{\min} (H_k)\ge- \epsilon_H.
\end{equation}
\item {In Line \ref{acc2} of Algorithm~\ref{fAR2_refresh_algo_so}  the following condition has to be checked in addition to} \req{eq:dec2}:
\begin{equation} \label{stop_so}
\lambda_{\min} (\nabla^2 m_k(s_k))\ge- \theta_2 \|s_k \|, \quad \theta_2>0.
\end{equation}
\item In Line 11 of Algorithm~\ref{fAR2_refresh_algo_so}  $(s_k,\lambda_k)$ has to satisfy \eqref{stop_so} in addition to  \req{eq:dec1}-\req{eq:dec2}.
\item {In Line 8 of Algorithm \ref{projected_secular_algo2} the condition $\lambda_{\min} (\nabla^2 m_k(W^{(j)}\hat s))\ge- \theta_2 \|\hat s \|$ has to be checked, too.
}
\end{itemize}

\vskip 0.1in

\begin{algorithm}[tb]
\caption{The Frozen AR2  algorithm for second order optimality points (\frozenso)} \label{fAR2_refresh_algo_so}
\begin{algorithmic}[1]
\Require  An initial point $x_0$; accuracy thresholds $\epsilon, \epsilon_H \in (0,1)$;
an initial regularization parameter $\sigma_0 >0$ are given, and constants
$\eta_1, \eta_2, \gamma_1, \gamma_2,  \theta_1, \theta_2, \sigma_{\min} $ s.t.
$$\sigma_{\min} \in (0,\sigma_0],\ \theta_1 >0, \ \theta_2>0, \ 0<\eta_1\le\eta_2<1,$$
$$0<\gamma_1 < 1 < \gamma_2, \ 0<C_{{\rm low}}<C_{{\rm up}}
$$
An integer $j_{\max} \ll n$.

\State Compute $f(x_0), g_0 = \nabla f(x_0), H_0 = \nabla^2 f(x_0)$, and set $k=0$, $\mathtt{refresh}=1$ and $V_0=[]$.
 \State{\bf Step 1: Test for termination.} If $\|g_k\| \le \epsilon$ and 
$\lambda_{\min} (H_k)\ge- \epsilon_H$, terminate.
 \State{\bf Step 2: Step computation.}
\State \hskip 0.2in Invoke Algorithm~\ref{projected_secular_algo2} providing the scalar $\widehat \lambda_k$, and the reduced step $\widehat s_k$,
\Statex \hskip 0.2in  the basis $W_{k}$ and  $V_{k+1}$
\State \hskip 0.2in  {\bf if} $W_{k} \widehat s_k$ satisfies \req{eq:dec2} and \req{stop_so}
\label{acc2}
\State\hskip 0.4in     set $s_k = W_{k} \widehat s_k$ and {\bf goto} Step 3 \label{psub2}
\State \hskip 0.2in {\bf end if}
\State \hskip 0.2in Compute $s_k = - ( H_k+\widehat \lambda_k I)^{-1} g_k$. \label{pnew2} \hfill   {\tt \% regularized Newton step}
\State \hskip 0.2in    {\bf if} {$(H_k+\widehat \lambda_k I) \not \succ 0 $ or {$\|s_k\|/\|\widehat s_k\|<C_{{\rm low}}$ or $\|s_k\|/\|\widehat s_k\|>C_{{\rm up}}$}}  \label{pd2} 
\State \hskip 0.4in   {\bf if} {$\mathtt{refresh}$}\label{jmax2}
\State \hskip 0.6in   Find $ \lambda_k$ and
$s_k = -(H_k+ \lambda_k I)^{-1} g_k$ such that \eqref{eq:dec1}-\eqref{eq:dec2}, and \req{stop_so} 
\Statex \hskip 0.6in hold by applying the  secant method to {$\phi_R( \lambda_k; g_k, H_k, \sigma_k)=0$ }  
\State \hskip 0.6in  {\bf Goto} Step 3 \label{secant2} 
\State \hskip 0.4in   {\bf else}
\State \hskip 0.6in     Define $x_{k+1}= x_k$, $\sigma_{k+1}= \sigma_k$, set $\mathtt{refresh}=1$, $k=k+1$ \label{rejpd2}
\Statex{\hskip 0.6in {\bf Goto} Step 1}
\hfill  {\tt \% unsuccessful iteration, step rejection}
\State \hskip 0.4in {\bf end if}
\State \hskip 0.2in {\bf end if}
\State {\bf Step 3} Compute 
$$
\rho_k = \frac{f(x_k) - f(x_k+ s_k)}{T_k(0)-T_k(s_k)}.
$$
\State   {\bf Step 4} %:  Successful iteration}
\If{$\rho_k \ge \eta_1$} \hfill  {\tt \% Successful iteration}
\State
set $x_{k+1}= x_k +  s_k$, compute $g_{k+1} = \nabla f(x_{k+1})$, $H_{k+1} = \nabla^2 f(x_{k+1})$
\Else \hfill    {\tt \% unsuccessful iteration, step rejection}
\State set $x_{k+1}= x_k$
\EndIf

\State {\bf Step 5: Regularization parameter update.} Compute
   \begin{equation*}\label{sig2bis2}
   \sigma_{k+1} \in \left \{
   \begin{array}{lll}
    [\max\{\sigma_{\min}, \gamma_1 \sigma_k\}, \sigma_k  ]                   & \mbox{ if } \rho_k\geq \eta_2
     \\
   \sigma_k &\mbox{ if }\rho_k\in [\eta_1,\eta_2)
      \\%& \mbox{{\small[successful iteration]}}\\
     \gamma_2 \sigma_k & \mbox{ otherwise }      
   \end{array}
   \right .
    \end{equation*}
   \State Increment $k$ by one set $\mathtt{refresh}=0$ and {\bf goto} Step 1.

\end{algorithmic}
\end{algorithm}

%%%%%%%%%%
\begin{algorithm}[tb]
\caption{Minimization of $m_k$ over the low-dimensional subspace} (second order optimality case) \label{projected_secular_algo2}
\begin{algorithmic}[1]
\Require The matrix $H_k$; the vector $g_k$; the matrix $V_{k}\in\mathbb{R}^{n\times d_k}$; accuracy thresholds $\theta_1>0$, $\theta_2>0$; the parameter $\sigma_k$;
an integer $j_{\max} \ll n$; {\tt refresh}.

\State \hskip 0.2in {\bf if} $\mathtt{refresh}$ \hskip 1.7in {\tt\% Generate new proj space}

\State \hskip 0.4in Set $V_{k+1}=[\, ]$ 

\State \hskip 0.4in {\bf for} $j=1,\ldots,j_{\max}-1$

\State \hskip 0.6in Set $W^{(j)}={\rm orth}([V_{k+1},g_k])$
\State \hskip 0.6in Compute projections $g^{(j)} = (W^{(j)})^T g_k$, $\quad H^{(j)} = (W^{(j)})^T H_k W^{(j)}$
\State \hskip 0.6in Find $\widehat \lambda$ s.t.
{
 $\phi_R(\widehat \lambda; g^{(j)}, H^{(j)}, \sigma_k) =0,$ i.e.}
 \begin{equation*}
  \widehat \lambda=\sigma_k \|(H^{(j)}+\widehat \lambda I)^{-1} g^{(j)}\|
 \end{equation*}
 \State \hskip 0.6in Set
 $\widehat s = -(H^{(j)}+\widehat \lambda I)^{-1} g^{(j)}$
 \State \hskip 0.6in {\bf if} $\|\nabla m_k(W^{(j)}\widehat s) \| \le \frac{1}{2}\theta_1 \|\widehat s\|^2$ and 
 $$
 \lambda_{\min} (\nabla^2 m_k(W^{(j)}\widehat s))\ge- \theta_2 \|\widehat s \|
 $$
 \State \hskip 0.8in Set  
 $\widehat \lambda_k=\widehat \lambda$, $\widehat s_k=\widehat s$,  $W_k =W^{(j)}$  and {\bf return}
 
\State \hskip 0.6in {\bf end if}
\State \hskip 0.6in Expand $V_{k+1}$ with new Krylov direction~\label{alg:expand_krylov2}
\State \hskip 0.4in {\bf end for}
\State \hskip 0.4in Set 
 $\widehat \lambda_k=\widehat \lambda$, $\widehat s_k=\widehat s$,  {$W_k =W^{(j)}$}  and {\bf return} 
\State \hskip 0.2in {\bf else}
\hskip 2.2in {\tt\% Project onto the old space}

\State \hskip 0.4in Set $W^{(\widehat \jmath)}= {\rm orth}([V_{k}, g_k])$, \revdone{where $\widehat \jmath=d_k+1$}  %
\State \hskip 0.4in Compute projections $g^{(\widehat \jmath)} = (W^{(\widehat \jmath)})^T g_k$, $\quad H^{(\widehat \jmath)} = (W^{(\widehat \jmath)})^T H_k W^{(\REV{\widehat \jmath})}$ 
\State \hskip 0.4in Find $\widehat \lambda$ s.t.
{
 $\phi_R(\widehat \lambda; g^{(\widehat \jmath)}, H^{(\widehat \jmath)}, \sigma_k) =0,$ i.e.}
 \begin{equation*}
  \widehat \lambda=\sigma_k \|(H^{(\widehat \jmath)}+\widehat \lambda I)^{-1} g^{(\widehat \jmath)}\|
 \end{equation*}
 \State \hskip 0.4in Set
 $\widehat s_k = -(H^{(\widehat \jmath)}+\widehat \lambda I)^{-1} g^{(\widehat \jmath)}$, $\quad \widehat \lambda_k=\widehat \lambda$
 \State \hskip 0.4in Set $V_{k+1}=V_{k}$ and $W_k= W^{(\widehat \jmath)}$
\REV{and {\bf return} }
\State \hskip 0.2in {\bf end if}

\end{algorithmic}
\end{algorithm}

%%%%%%%%%%%%%%%%%%%%%%%%%%%%%%%%%%%%%%%%%%%%%%%%%%%
We stress that  $H_k$ is expected to be positive definite during  the final convergence phase of  the  \frozenso \ method and this ensures that condition \eqref{bound_p} holds eventually.

Next, we provide an upper bound on the number of iterations needed by \frozenso\ to compute an
 $(\epsilon,\epsilon_H)$-approximately second-order minimizer.

\begin{theorem}
Suppose that {\rm\textbf{AS.1}} and {\rm\textbf{AS.2}} hold.
Given $\epsilon, \epsilon_H \in (0,1)$, there exist  positive constants  $\kappa_p$ and $\kappa_{H}$ such that the \frozenso\ algorithm 
needs at most
$$
\left \lceil (f(x_0)- f_{\rm low})
 \max \left\{\frac{\epsilon^{-\frac{3}{2}}}{\kappa_p},\frac{\epsilon_H^{-2}}{\kappa_H} \right\}+1
\right \rceil,
$$
successful iterations and at most 
$$
 \left \lceil
 (f(x_0)- f_{\rm low})
 \max \left \{\frac{\epsilon^{-\frac{3}{2}}}{\kappa_p},\frac{\epsilon_H^{-2}}{\kappa_H}\right\}+1
\right \rceil
                \left(3/2+\frac{3/2|\log\gamma_1|}{\log\gamma_2}\right)+
\frac{3/2}{\log\gamma_2}\log\left(\frac{\sigma_{\max}}{\sigma_0}\right), 
$$
iterations to compute an iterate $x_k$ such that $\|g_k\|\le \epsilon$ and $\lambda_{{\min}}
(H_{k})\ge-\epsilon_H$.
\end{theorem}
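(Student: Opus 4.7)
The plan is to mirror the first-order argument of Theorem \ref{theorem_complexity}, splitting the successful iterations according to which half of the dual termination test \eqref{tc.2} is violated. As preparation, I would verify that the decrease-in-the-Taylor-series lemma, the upper bound $\sigma_k\le\sigma_{\max}$, and the counting Lemma \ref{bound_it_S} all carry over essentially verbatim to \frozenso: the extra condition \eqref{stop_so} only further restricts step acceptance, and the unsuccessful-iteration bookkeeping—including the handling of $\clubsuit$ and the rule that $\sigma_k$ is left unchanged at refresh-triggered rejections—is structurally unchanged. In particular, a $\sigma_{\max}$ of the same form continues to exist, since the step-acceptance logic that drives that bound is intact.

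Next, at any successful iteration $k$ preceding termination, at least one of $\|g_{k+1}\|>\epsilon$ or $\lambda_{\min}(H_{k+1})<-\epsilon_H$ must hold. The first case is covered directly by Lemma \ref{lemma_decrease}, yielding $f(x_k)-f(x_{k+1})\ge\kappa_p\,\epsilon^{3/2}$. For the second case, the eigenvalue test \eqref{stop_so} combined with the identity $\nabla^2 m_k(s_k)=H_k+\sigma_k\|s_k\|I+\sigma_k s_k s_k^T/\|s_k\|$ gives, via a Weyl-type inequality, $\lambda_{\min}(H_k)\ge -(\theta_2+2\sigma_{\max})\|s_k\|$; using Assumption AS.1 to transfer this bound to $H_{k+1}$ yields $\lambda_{\min}(H_{k+1})\ge -(\theta_2+2\sigma_{\max}+L_2)\|s_k\|$, so $\lambda_{\min}(H_{k+1})<-\epsilon_H$ forces a quantitative lower bound on $\|s_k\|$ in terms of $\epsilon_H$. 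For $k\in{\cal I}_{RN}$, the strengthened requirement $H_k+\widehat\lambda_k I\succ 0$ imposed in Line \ref{pd2}, combined with $\widehat\lambda_k=\sigma_k\|\widehat s_k\|$ and \eqref{bound_p}, delivers the analogous bound via $\|\widehat s_k\|$. Plugging these lower bounds into \eqref{eq:diffT0}--\eqref{eq:diffT1} produces a second per-iteration decrease of the form $f(x_k)-f(x_{k+1})\ge\kappa_H\,\epsilon_H^{\alpha}$ with the exponent $\alpha$ matching the $\epsilon_H^{-2}$ term in the claimed count.

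To finish, I would replace the single lower bound \eqref{decr-eps} by the minimum of the two per-iteration decreases, telescope via AS.2 over the successful iterations up to the first index where \eqref{tc.2} holds, and invoke Lemma \ref{bound_it_S} to pass from the number of successful iterations to the total iteration count. The main obstacle I foresee is precisely calibrating the lower bound on $\|s_k\|$ (and $\|\widehat s_k\|$) so that, when fed into \eqref{eq:diffT0}--\eqref{eq:diffT1}, it produces the exponent of $\epsilon_H$ demanded by the theorem: a naive substitution into $\tfrac{1}{3}\sigma_k\|s_k\|^3$ only gives an $\epsilon_H^3$ per-iteration decrease, so the sharp bookkeeping must exploit the full structure of \eqref{stop_so} together with the identity $\widehat\lambda_k=\sigma_k\|\widehat s_k\|$ and the ratio bound \eqref{bound_p}, and this is where the bulk of the technical work will lie.
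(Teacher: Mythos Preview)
Your outline is essentially the paper's own proof: for $k\notin{\cal I}_{RN}$ the paper cites \cite[Lemma~3.3.3]{book_compl}, which is exactly your Weyl-plus-Lipschitz argument from \eqref{stop_so}, and for $k\in{\cal I}_{RN}$ the paper uses precisely your observation that $H_k+\widehat\lambda_kI\succ 0$ forces $\lambda_{\min}(H_k)>-\widehat\lambda_k=-\sigma_k\|\widehat s_k\|$, then transfers to $H_{k+1}$ via the Lipschitz continuity of the Hessian, and finally feeds the resulting lower bound on $\|\widehat s_k\|$ (resp.\ $\|s_k\|$) into \eqref{eq:diffT0}--\eqref{eq:diffT1}.

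Your worry in the last paragraph is misplaced: the ``naive'' substitution giving a per-iteration decrease of order $\epsilon_H^{3}$ is exactly what the paper derives, leading to an $\epsilon_H^{-3}$ count of successful iterations; the exponent $-2$ in the theorem statement (and in the introduction) is a typo---the standard second-order complexity for \arc-type methods is $\mathcal{O}(\max\{\epsilon^{-3/2},\epsilon_H^{-3}\})$, and nothing in the argument here improves on that. So there is no extra ``sharp bookkeeping'' to perform; just carry your lower bounds through \eqref{eq:diffT0}--\eqref{eq:diffT1}, telescope, and apply Lemma~\ref{bound_it_S}.
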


\begin{proof}
Let us keep the same notation of  Section 4 and denote with 
${\cal I}_{RN}$ the set of iteration indexes such that $s_k$ has been computed  in Line \ref{pnew2} of Algorithm \ref{fAR2_refresh_algo_so}.
Note that at  any successful iteration $k$ in such a set, it must hold that $H_k +\hat \lambda I$ is positive definite.  Then, 
$$
\lambda_{\min} (H_k)>-\widehat \lambda.
$$
Therefore, by using \textbf{AS.1} and Lemma 3.3,  for any successful iteration $k$ with $k\in {\cal I}_{RN}$  and for any  $d\in \IR^n$, $\|d\|=1$,  we obtain
\begin{eqnarray*}
d^T H_{k+1} d&=&d^T (H_{k+1}-H_k) d+ d^T H_k d\ge -L_2  \|s_k\|- \widehat \lambda \\
&=&  -L_2   C_{{\rm up}} \|\widehat s_k\|- \sigma_k \|\widehat s_k\|\ge -(L_2 C_{{\rm up}}+ \sigma_{\max})  \|\widehat s_k\|.
\end{eqnarray*}
This latter inequality, along with 
$\min_{\|d\|=1} d^T H_{k+1} =\lambda_{{\min}} (H_{k+1})$ and 
Lemma  \ref{lemma_decrease}, yields 
\begin{equation} \label{bound_p2}
\|\widehat s_k\|\ge \max \{ \kappa_0^{1/3} \|g_{k+1}\|^{1/2}, -\kappa_{H,0}^{1/3} \lambda_{{\min}} (H_{k+1}) \},
\end{equation} 
where
$\kappa_{H,0}=\left(\frac{1}{L_H C_{{\rm up}}+ \sigma_{\max}}\right)^3$ and $\kappa_0$ has been defined as in Lemma \ref{lemma_decrease}.
Moreover, by \eqref{eq:diffT0}
$$
f(x_k)-f(x_{k+1})\ge \eta_1 (T_k(0)-T_k(s_k)) 
\ge \frac{\eta_1}{2} \sigma_k \|\widehat s_k\|\|s_k\|^2 
\ge \frac{\eta_1}{2} \sigma_{\min} C_{\rm low}^2  \|\widehat s_k\|^3,
$$ and we can conclude 
 that for any successful iteration $k$ with $k\in {\cal I}_{RN}$ it holds
$$
f(x_k)-f(x_{k+1})\ge  \frac{\eta_1}{2} \sigma_{\min} C_{\rm low}^2   \max \{ \kappa_0  \|g_{k+1}\|^{3/2}, -\kappa_{H,0} \lambda_{\min} (H_{k+1})^{3} \}.
$$
In case $ k \not \in {\cal I}_{RN}$,  Lemma 3.3.3 in  \cite{book_compl} ensures that the step $s_k$ computed in Line \ref{psub} of Algorithm \ref{fAR2_refresh_algo_so} satisfies:
$$
\|s_k\|\ge -\frac {1} {L_H+\theta_2+\sigma_{\max}} \lambda_{\min} (H_{k+1}) .
$$
Then, from  \eqref{eq:diffT1} and Lemma \ref{lemma_decrease} it follows
that
\begin{eqnarray*}
f(x_k)-f(x_{k+1})&\ge& \eta_1 (T_k(0)-T_k(s_k))\ge \frac{\eta_1}{3} \sigma_k\|s_k\|^3\\
    &\ge& \frac{\eta_1}{3}  \sigma_{\min} \max\left\{\kappa_1 \|g_{k+1}\|^{3/2},  -\kappa_{H,1} \lambda_{\min} (H_{k+1})^{3}\right\} ,
\end{eqnarray*}
where $\kappa_{H,1}= \left (\frac {1} {L_H+\theta_2+\sigma_{\max}} \right)^{1/3}$.

Finally, 
 for any iteration $ k$  such that the \frozenso\ algorithm has not terminated before or at
iteration $k + 1$, we must have that either  $\|g_{k+1}\|>\epsilon$ or $ \lambda_{\min} (H_{k+1})< -\epsilon_H$. 
 Therefore, at any successful iteration $k$ we have 
  $$f(x_k)-f(x_{k+1})\ge  \eta_1  \sigma_{\min} \min \left\{\kappa_p \epsilon^{3/2},  \kappa_{H} \epsilon_{H} ^{3}\right\}, 
$$
with $\kappa_p=\eta_1\sigma_{\min} \min\left\{C_{\rm low}^2\frac{\kappa_0}{2},\frac{\kappa_1}{3}\right\} $ and $\kappa_H=\eta_1\sigma_{\min}\min\left\{C_{\rm low}^2\frac{\kappa_{H,0}}{2},\frac{\kappa_{H,1}}{3}\right\}$.
The thesis then follows proceeding as in the proof of Theorem \ref{theorem_complexity}.
\end{proof}

\section{The {\sc OPM} test set and complete results} \label{sec:opm}
We report in Table \ref{tab:OPM} the {\sc OPM} test problems used in the experiments described in section \ref{expe:OPM} and their dimension.
Tables \ref{tab:risuopm} and \ref{tab:risuopm2}  show the results obtained by using \galr, \frozenr\ and \frozenp\ in the solution of 
the OPM problems listed in Table \ref{tab:OPM}.
The tables display the number of nonlinear iterations (\#NLI) along with the number of  factorizations (\#fact) for the three solvers. Moreover, in case of \frozenr\ and \frozenp, they report also the number of iterations where the basis is refreshed (\#ref), the average dimension of the projected problems ($ave_K$), the number of times
the step $s_k$ is computed in Line \ref{psub} of Algorithm \ref{fAR2_refresh_algo} (\#sub) using a frozen subspace, and the number of times the solvers employ the secant method in Line~\ref{secant} of 
Algorithm \ref{fAR2_refresh_algo} (\#sec). The symbol '*' means that the nonlinear solver did not converge in 5000 iterations.

\begin{table}[tb]
\scriptsize
  \centering
    \begin{tabular}{l|r|l|r|l|r|l| r}
    \toprule
    Problem   & \multicolumn{1}{c|}{$n$ } & Problem   & \multicolumn{1}{c|}{$n$ } & Problem   & \multicolumn{1}{l|}{$n$ } & \multicolumn{1}{l|}{Problem  } & \multicolumn{1}{c}{$n$ } \\
    \midrule
         ARGLINA    	 & 1000  &       DIXMAANC   	 & 3000  &       ENGVAL1    	 & 1000  &      PENALTY3   	 & 1000\\
    
          ARGLINB    	 & 1000  &       DIXMAAND   	 & 3000  &       ENGVAL2    	 & 1000  &      POWELLSG   	 & 1000\\
    
          ARGLINC    	 & 1000  &       DIXMAANE   	 & 3000  &       EXTROSNB   	 & 1000  &      POWR       	 & 1000\\
    
          ARGTRIG    	 & 1000  &       DIXMAANF   	 & 3000  &       FMINSURF   	 & 900   &      ROSENBR    	 & 1000\\
    
          ARWHEAD    	 & 1000  &       DIXMAANG   	 & 3000  &       FREUROTH   	 & 1000  &      SCOSINE    	 & 1000\\
    
          BDARWHD    	 & 1000  &       DIXMAANH   	 & 3000  &       GENHUMPS   	 & 1000  &      SCURLY10   	 & 1000\\
    
          BROWNAL    	 & 1000  &       DIXMAANI   	 & 3000  &       HELIX      	 & 1000  &      SCURLY20   	 & 1000\\
    
          BROYDENBD  	 & 1000  &       DIXMAANJ   	 & 3000  &       HILBERT    	 & 1000  &      SCURLY30   	 & 1000\\
    
          CHANDHEU   	 & 1000  &       DIXMAANK   	 & 3000  &       INDEF      	 & 1000  &      SENSORS    	 & 1000\\
    
          CHEBYQAD   	 & 1000  &       DIXMAANL   	 & 3000  &       INTEGREQ   	 & 1000  &      SPMSQRT    	 & 1000\\
    
          COSINE     	 & 1000  &       DIXON      	 & 1000  &       MANCINO    	 & 1000  &      TQUARTIC   	 & 1000\\
    
          CRGLVY     	 & 1000  &       DQRTIC     	 & 1000  &       MSQRTALS   	 & 900   &      TRIDIA     	 & 1000\\
    
          CUBE       	 & 1000  &       EDENSCH    	 & 1000  &       MSQRTBLS   	 & 900   &      VARDIM     	 & 1000\\
    
          CURLY10    	 & 1000  &       EG2        	 & 1000  &       NONDIA     	 & 1000  &      WMSQRTALS  	 & 900\\
    
          CURLY20    	 & 1000  &       EG2S       	 & 1000  &       NONDQUAR   	 & 1000  &      WMSQRTBLS  	 & 900\\
    
          CURLY30    	 & 1000  &       EIGENALS   	 & 1056  &       NZF1       	 & 1300  &      WOODS      	 & 1000\\
    
          DIXMAANA   	 & 3000  &       EIGENBLS   	 & 1056  &       PENALTY1   	 & 1000  &        	       &     \\
         DIXMAANB   	 & 3000  &       EIGENCLS   	 & 1056  &       PENALTY2   	 & 1000  &         	      &     \\
   
\bottomrule\end{tabular}%
      \caption{The {\sc OPM} test problems and their dimension.\label{tab:OPM}}
 \end{table}%

\begin{sidewaystable} 
\scriptsize
  \centering							
									
   \begin{tabular}{l|rr|rrrrrr|rrrrrr}
   \toprule
        \multicolumn{1}{r}{} & \multicolumn{2}{c}{\galr} & \multicolumn{6}{c}{\frozenr}                           & \multicolumn{6}{c}{\frozenp} \\
        \midrule
         & \#NLI & \#fact &  \#NLI & \#fact & \# ref   & $ave_K$ & \#sub & \#sec & \#NLI & \#fact & \#ref   & $ave_K$ & \#sub & \#sec \\ \midrule
     ARGLINA & 5     & 5     & 5     & 1     & 1     & 1     & 4     & 0     & 5     & 0     & 1     & 1     & 4     & 0 \\
        ARGTRIG & 12    & 24    & 12    & 14    & 1     & 3     & 0     & 0     & 12    & 11    & 1     & 17    & 0     & 0 \\
        ARWHEAD & 5     & 8     & 5     & 2     & 1     & 2     & 4     & 0     & 5     & 0     & 1     & 2     & 4     & 0 \\
        BDARWHD & 12    & 24    & 12    & 2     & 1     & 2     & 11    & 0     & 12    & 0     & 1     & 2     & 11    & 0 \\
        BROWNAL & 2     & 4     & 2     & 2     & 1     & 1     & 0     & 0     & 2     & 0     & 1     & 1     & 1     & 0 \\
        BROYDENBD & 7     & 11    & 7     & 9     & 1     & 3     & 0     & 0     & 7     & 6     & 1     & 3     & 0     & 0 \\
        CHANDHEU & 2     & 2     & 2     & 2     & 1     & 2     & 1     & 0     & 2     & 0     & 1     & 2     & 1     & 0 \\
       % CHEBYQAD &* &* &* &* &* &* &* &* &* &* &* &* &* &* \\
        CRGLVY & 11    & 22    & 12    & 12    & 1     & 2     & 1     & 0     & 13    & 10    & 1     & 2     & 2     & 0 \\
        CUBE & 47    & 161   & 47    & 25    & 1     & 3     & 24    & 0     & 47    & 8     & 1     & 10    & 38    & 0 \\
        CURLY10 & 23    & 109   & 21    & 77    & 2     & 40.7  & 2     & 1     & 20    & 16    & 3     & 20.5  & 1     & 0 \\
        CURLY20 & 24    & 118   & 25    & 92    & 3     & 43.2  & 3     & 1     & 24    & 20    & 3     & 24    & 1     & 0 \\
        CURLY30 & 26    & 133   & 31    & 166   & 4     & 43.9  & 1     & 2     & 29    & 25    & 3     & 21.7  & 1     & 0 \\
        DIXMAANA & 6     & 10    & 8     & 2     & 1     & 1     & 6     & 0     & 8     & 1     & 1     & 1     & 6     & 0 \\
        DIXMAANB & 38    & 161   & 57    & 58    & 2     & 2.1   & 2     & 0     & 11    & 2     & 1     & 2     & 8     & 0 \\
        DIXMAANC & 21    & 66    & 8     & 3     & 1     & 2     & 6     & 0     & 8     & 1     & 1     & 2     & 6     & 0 \\
        DIXMAAND & 28    & 129   & 42    & 41    & 1     & 2     & 2     & 0     & 9     & 1     & 1     & 2     & 7     & 0 \\
        DIXMAANE & 10    & 31    & 10    & 3     & 1     & 2     & 8     & 0     & 10    & 1     & 1     & 2     & 8     & 0 \\
        DIXMAANF & 75    & 320   & 64    & 76    & 2     & 11.2  & 2     & 0     & 38    & 30    & 1     & 2     & 7     & 0 \\
        DIXMAANG & 67    & 395   & 69    & 54    & 1     & 2     & 16    & 0     & 35    & 23    & 2     & 2.2   & 10    & 0 \\
        DIXMAANH & 73    & 334   & 54    & 53    & 2     & 2.6   & 5     & 0     & 46    & 32    & 1     & 2     & 13    & 0 \\
        DIXMAANI & 12    & 42    & 21    & 12    & 1     & 2     & 10    & 0     & 14    & 2     & 1     & 2     & 11    & 0 \\
        DIXMAANJ & 81    & 430   & 69    & 57    & 2     & 2.2   & 15    & 0     & 36    & 26    & 1     & 2     & 9     & 0 \\
        DIXMAANK & 59    & 369   & 77    & 66    & 3     & 2.2   & 15    & 0     & 40    & 30    & 1     & 2     & 9     & 0 \\
        DIXMAANL & 93    & 445   & 53    & 52    & 1     & 2     & 2     & 0     & 54    & 43    & 2     & 2.4   & 9     & 0 \\
        DIXON & 9     & 32    & 6     & 7     & 1     & 3     & 1     & 0     & 6     & 4     & 1     & 4     & 1     & 0 \\
        DQRTIC & 8     & 8     & 8     & 1     & 1     & 1     & 7     & 0     & 8     & 0     & 1     & 0     & 7     & 0 \\
        EDENSCH & 24    & 89    & 13    & 8     & 1     & 2     & 6     & 0     & 13    & 6     & 1     & 2     & 6     & 0 \\
        EG2 & 16    & 32    & 9     & 1     & 1     & 1     & 8     & 0     & 9     & 0     & 1     & 1     & 8     & 0 \\
        EG2S & 68    & 207   & 72    & 61    & 2     & 3.7   & 14    & 0     & 99    & 93    & 2     & 3.8   & 4     & 0 \\
        EIGENALS & 523   & 5309  & 89    & 97    & 1     & 10    & 1     & 0     & 65    & 61    & 1     & 8     & 3     & 0 \\
  EIGENBLS	& 670	& 2289 &	847 & 	894	&	8	&	6.9&	0	&0 &	739 &	734	&	9	&	45.0 &	0	& 4 \\

        \bottomrule
    \end{tabular}%
    \caption{Complete results obtained by \galr\, \frozenr\ and \frozenp on OPM problems in Table \ref{tab:OPM}.}
      \label{tab:risuopm}%
%\end{table}%
\end{sidewaystable}

\begin{sidewaystable} 
\scriptsize
  \centering							
									
   \begin{tabular}{l|rr|rrrrrr|rrrrrr}
   \toprule
        \multicolumn{1}{r}{} & \multicolumn{2}{c}{\galr} & \multicolumn{6}{c}{\frozenr}                           & \multicolumn{6}{c}{\frozenp} \\
        \midrule
         & \#NLI & \#fact &  \#NLI & \#fact & \# ref   & $ave_K$ & \#sub & \#sec & \#NLI & \#fact & \#ref   & $ave_K$ & \#sub & \#sec \\ \midrule
        EIGENCLS & 768   & 6398  & 1016  & 1048  & 3     & 15.6  & 4     & 0     & 1273  & 1260  & 3     & 49.4  & 11    & 1 \\
        ENGVAL1 & 8     & 13    & 8     & 6     & 1     & 1     & 2     & 0     & 8     & 5     & 1     & 1     & 2     & 0 \\
        EXTROSNB & 10    & 18    & 10    & 12    & 1     & 3     & 0     & 0     & 10    & 9     & 1     & 6     & 0     & 0 \\
        FMINSURF & 94    & 211   & 92    & 88    & 1     & 3     & 6     & 0     & 112   & 106   & 1     & 4     & 5     & 0 \\
        FREUROTH & 6     & 13    & 6     & 7     & 1     & 2     & 0     & 0     & 6     & 5     & 1     & 2     & 0     & 0 \\
        HELIX & 17    & 60    & 17    & 17    & 1     & 2     & 1     & 0     & 17    & 10    & 1     & 2     & 6     & 0 \\
        HILBERT & 6     & 12    & 7     & 2     & 1     & 1     & 5     & 0     & 7     & 1     & 1     & 1     & 5     & 0 \\
        %INDEF & 19    & 93    & 51    & 35    & 3     & 3.1   & 25    & 0     &* &* &* &* &* &* \\
        INDEF & 19    & 93    & 51    & 35    & 3     & 3.1   & 25    & 0     &196 & 55 & 13 & 2.3 &128 &0 \\ %try
        INTEGREQ & 15    & 30    & 13    & 16    & 1     & 4     & 0     & 0     & 13    & 13    & 1     & 50    & 0     & 1 \\
        MANCINO & 121   & 884   & 30    & 555   & 11    & 46    & 0     & 8     & 30    & 50    & 11    & 46.5  & 0     & 9 \\
        MSQRTALS & 241   & 2526  & 412   & 625   & 23    & 8.6   & 6     & 0     & 256   & 250   & 16    & 45.7  & 2     & 12 \\
        MSQRTBLS & 318   & 3945  & 421   & 774   & 41    & 11    & 14    & 0     & 468   & 485   & 31    & 47    & 2     & 29 \\
        NONDIA & 38    & 83    & 38    & 2     & 1     & 2     & 37    & 0     & 38    & 0     & 1     & 2     & 37    & 0 \\
        NONDQUAR & 12    & 58    & 12    & 18    & 1     & 7     & 0     & 0     & 12    & 11    & 1     & 11    & 0     & 0 \\
        NZF1 & 10    & 20    & 10    & 8     & 1     & 3     & 4     & 0     & 10    & 5     & 1     & 9     & 4     & 0 \\
        PENALTY1 & 14    & 28    & 12    & 1     & 1     & 1     & 11    & 0     & 12    & 0     & 1     & 1     & 11    & 0 \\
      %  PENALTY2 &* &* &* &* &* &* &* &* &* &* &* &* &* &* \\
        PENALTY3 & 838   & 3296  & 773   & 835   & 6     & 8.4   & 3     & 0     & 75    & 69    & 6     & 35.5  & 3     & 3 \\
        POWELLSG & 12    & 31    & 12    & 4     & 1     & 3     & 10    & 0     & 12    & 0     & 1     & 3     & 11    & 0 \\
        POWR & 13    & 26    & 12    & 11    & 1     & 11    & 11    & 0     & 12    & 12    & 1     & 50    & 0     & 1 \\
        ROSENBR & 3851  & 7269  & 4617  & 4619  & 1     & 3     & 0     & 0     & 4596  & 4595  & 1     & 6     & 0     & 0 \\
        SENSORS & 54    & 318   & 61    & 232   & 5     & 29    & 0     & 2     & 79    & 68    & 4     & 22.6  & 7     & 0 \\
        SPMSQRT & 18    & 59    & 18    & 24    & 2     & 4.4   & 1     & 0     & 25    & 24    & 1     & 4     & 0     & 0 \\
        TQUARTIC & 12    & 24    & 12    & 9     & 1     & 4     & 6     & 0     & 12    & 5     & 1     & 32    & 6     & 0 \\
        TRIDIA & 5     & 10    & 5     & 4     & 1     & 2     & 2     & 0     & 5     & 2     & 1     & 2     & 2     & 0 \\
        VARDIM & 24    & 252   &* &* &* &* &* &* &* &* &* &* &* &* \\
        WMSQRTALS & 89    & 761   & 293   & 393   & 12    & 7.9   & 2     & 0     & 292   & 280   & 18    & 47.3  & 3     & 9 \\
        WMSQRTBLS & 183   & 1387  & 240   & 345   & 13    & 10.5  & 3     & 0     & 352   & 348   & 7     & 46.1  & 2     & 5 \\
        WOODS & 15    & 91    & 15    & 4     & 1     & 4     & 14    & 0     & 15    & 0     & 1     & 4     & 14    & 0 \\
        \bottomrule
    \end{tabular}%
    \caption{Complete results obtained by \galr\, \frozenr\ and \frozenp on OPM problems in Table \ref{tab:OPM}. (continued)}
      \label{tab:risuopm2}%
%\end{table}%
\end{sidewaystable}

\bibliographystyle{siam}
\bibliography{secularbib}

\end{document}